\providecommand{\U}[1]{\protect\rule{.1in}{.1in}}
\newtheorem{theorem}{Theorem}[section]
\newtheorem{lemma}[theorem]{Lemma}
\newtheorem{proposition}[theorem]{Proposition}
\newtheorem{definition}[theorem]{Definition}
\newtheorem{assumption}{Assumption}
\numberwithin{equation}{section}
\newcommand{\tr}{\mathrm{tr}}
\newcommand{\st}{\mathrm{s.\,t.}\,\,}
\newcommand{\bR}{\mathbb{R}}
\newcommand{\bN}{\mathbb{N}}
\newcommand{\bE}{\mathbb{E}}
\newcommand{\cA}{\mathcal{A}}
\newcommand{\cC}{\mathcal{C}}
\newcommand{\cL}{\mathcal{L}}
\newcommand{\cB}{\mathcal{B}}
\newcommand{\cR}{\mathcal{R}}
\newcommand{\tG}{\mathtt{G}}
\newcommand{\tV}{\mathtt{V}}
\newcommand{\tE}{\mathtt{E}}
\newcommand{\Rn}{\mathbb{R}^{n}}
\newcommand{\Rnp}{\mathbb{R}^{n \times p}} 
\newcommand{\Rpp}{\mathbb{R}^{p \times p}} 
\newcommand{\Rnn}{\mathbb{R}^{n \times n}}  
\newcommand{\Rnm}{\mathbb{R}^{n \times m}}
\newcommand{\Rmn}{\mathbb{R}^{m \times n}}
\newcommand{\barX}{\bar{X}}
\newcommand{\barD}{\bar{D}}
\newcommand{\barS}{\bar{S}}
\newcommand{\barE}{\bar{E}}
\newcommand{\barT}{\bar{T}}
\newcommand{\avX}{\bar{\mathbf{X}}}
\newcommand{\avD}{\bar{\mathbf{D}}}
\newcommand{\avS}{\bar{\mathbf{S}}}
\newcommand{\avE}{\bar{\mathbf{E}}}
\newcommand{\avT}{\bar{\mathbf{T}}}
\newcommand{\bfX}{\mathbf{X}}
\newcommand{\bfD}{\mathbf{D}}
\newcommand{\bfS}{\mathbf{S}}
\newcommand{\bfH}{\mathbf{H}}
\newcommand{\bfone}{\mathbf{1}}
\newcommand{\bfLambda}{\mathbf{\Lambda}}
\newcommand{\bfW}{\mathbf{W}}
\newcommand{\bfJ}{\mathbf{J}}
\newcommand{\avXkn}{\bar{\mathbf{X}}^{(k + 1)}}
\newcommand{\bfXkn}{\mathbf{X}^{(k + 1)}}
\newcommand{\barXkn}{\bar{X}^{(k + 1)}}
\newcommand{\zz}{^{\top}}
\newcommand{\ff}{_{\mathrm{F}}}
\newcommand{\fs}{^2_{\mathrm{F}}}
\newcommand{\Snp}{\mathcal{S}_{n,p}}
\newcommand{\dkh}[1]{\left(#1\right)}
\newcommand{\hkh}[1]{\left\{#1\right\}}
\newcommand{\fkh}[1]{\left[#1\right]}
\newcommand{\jkh}[1]{\left\langle#1\right\rangle}
\newcommand{\norm}[1]{\left\|#1\right\|}
\newcommand{\abs}[1]{\left\lvert #1\right\rvert}
\newcommand{\sym}{\mathrm{sym}}
\newcommand{\iid}{i \in [d]}
\newcommand{\iin}{i \in [n]}
\newcommand{\sumiid}{\sum\limits_{i=1}^d}
\definecolor{Gray}{rgb}{0.5,0.5,0.5}
\newcommand{\Rmnum}[1]{\expandafter\@slowromancap\romannumeral #1@}
\begin{document}

\title{A Variance-Reduced Stochastic Gradient Tracking Algorithm 
	for Decentralized Optimization with Orthogonality Constraints}

\author{
	Lei Wang\thanks{State Key Laboratory of Scientific and Engineering 
		Computing, Academy of Mathematics and Systems Science, Chinese  Academy of 
		Sciences, and University of Chinese Academy of Sciences, China 
		(\href{mailto:wlkings@lsec.cc.ac.cn}{wlkings@lsec.cc.ac.cn}). 
		Research is supported by the National Natural Science Foundation of China 
		(No. 11971466 and 11991020).}
	\and 
	Xin Liu\thanks{State Key Laboratory of Scientific and Engineering 
		Computing, Academy of Mathematics and Systems Science, Chinese Academy of 
		Sciences, 
		and University of Chinese Academy of Sciences, China 
		(\href{mailto:liuxin@lsec.cc.ac.cn}{liuxin@lsec.cc.ac.cn}). 
		Research is supported in part by the National Natural Science Foundation of 
		China (No. 12125108, 11991021, and 12288201), 
		Key Research Program of Frontier Sciences, 
		Chinese Academy of Sciences (No. ZDBS-LY-7022).}
}

\date{} 
\maketitle

\begin{abstract}
	Decentralized optimization with orthogonality constraints
	is found widely in scientific computing and data science.
	Since the orthogonality constraints are nonconvex,
	it is quite challenging to design efficient algorithms.
	Existing approaches leverage the geometric tools from Riemannian optimization
	to solve this problem at the cost of high sample and communication complexities.
	To relieve this difficulty, 
	based on two novel techniques that can waive the orthogonality constraints,
	we propose a variance-reduced stochastic gradient tracking (VRSGT) algorithm
	with the convergence rate of $O(1 / k)$ to a stationary point.
	To the best of our knowledge, 
	VRSGT is the first algorithm for decentralized optimization with orthogonality constraints
	that reduces both sampling and communication complexities simultaneously.
	In the numerical experiments, 
	VRSGT has a promising performance in a real-world autonomous driving application.

\end{abstract}


\section{Introduction}

This paper focuses on the multi-agent optimization problem \eqref{eq:opt-stiefel} 
with orthogonality constraints
that is defined on a connected network $\tG = (\tV, \tE)$
with $\tV = [d] := \{1, 2, \dotsc, d\}$ being composed of all the agents
and $\tE = \{(i, j) \mid i < j, i \text{~and~} j \text{~are connected}\}$
being the set of all the communication links.
\begin{equation}\label{eq:opt-stiefel}
	\begin{aligned}
		\min\limits_{X \in \Rnp} \hspace{2mm} 
		& f(X) := \dfrac{1}{d} \sumiid f_i(X) \\
		\st \hspace{3mm} & X\zz X = I_p.
	\end{aligned}
\end{equation}
Here, $I_p$ stands for the $p \times p$ identity matrix,
and $f_i: \Rnp \to \bR$ is a local loss function privately known by the $i$-th agent for $i \in [d]$.
The set of all $n \times p$ orthogonal matrices 
is referred to as Stiefel manifold \cite{Stiefel1935}
denoted by $\Snp = \{X \in \Rnp \mid X\zz X = I_p\}$,
which is a special case of Riemannian manifold \cite{Absil2008} embedded in $\Rnp$.

Problems of the form \eqref{eq:opt-stiefel}
have profound applications in various scientific and engineering areas,
such as spectral analysis \cite{Kempe2008,Huang2020},
dictionary learning \cite{Raja2015},
eigenvalue estimation of the covariance matrix \cite{Penna2014},
and deep neural networks with orthogonal constraints 
\cite{Arjovsky2016,Vorontsov2017,Huang2018}.
It is noteworthy that, in these applications,
each local function $f_i$ is an average cost of different samples.
Without loss of generality, we assume that each agent owns $l$ samples
and $f_i$ can be represented as follows.
\begin{equation} \label{eq:finite-sum}
	f_i (X) = \dfrac{1}{l} \sum_{j = 1}^{l} f^{[j]}_{i} (X),
\end{equation}
where $f^{[j]}_{i}$ denotes the cost for the $j$-th data sample at the $i$-th agent.

\subsection{Decentralized Formulation}

In this paper, we consider the scenario that
only local communications are permissible,
namely, the agents can only exchange information 
with their immediate neighbors through the network.
Thus, the global information is not available,
and the agent $i$ needs to maintain a local copy $X_i$ of the common variable $X$.
Each agent only uses the local information to update its local variable
and then communicates with neighbors to reach a consensus.
This type of problem is usually called {\it decentralized} optimization.

For convenience, we use the notation 
$\bfX = [X_1\zz, \dotsc, X_d\zz]\zz \in \bR^{dn \times p}$ 
to stack all the local variables.
Now the concerned problem \eqref{eq:opt-stiefel} can be recast as 
the following decentralized formulation.
\begin{subequations}\label{eq:opt-dest}
	\begin{align}
		\label{eq:obj-dest}
		\min\limits_{ \bfX } \hspace{4mm} 
		& \dfrac{1}{d} \sumiid f_i(X_i) \\
		\label{eq:con-consensus}
		\st \hspace{2.5mm} &  X_i = X_j, \hspace{2mm} (i, j) \in \tE, \\
		\label{eq:con-orth}
		& X_i\zz X_i = I_p, \hspace{2mm} \iid.
	\end{align}
\end{subequations}

Since the network $\tG$ is assumed to be connected,
the constraints \eqref{eq:con-consensus} enforce the consensus condition
$X_1 = X_2 = \dotsb = X_d$.
Therefore, our goal is to seek a consensus 
such that each local variable $X_i$ is a first-order stationary point of \eqref{eq:opt-stiefel}
whose definition can be stated as follows.

\begin{definition}[\cite{Wang2021multipliers}]
	A point $X \in \Rnp$ is called a first-order stationary point of \eqref{eq:opt-stiefel}
	if and only if 
	\begin{equation*}
		0 = \mathrm{grad}\, f(X) 
		:= \nabla f(X) - X \sym \dkh{X\zz \nabla f(X)}
		\mbox{~and~}
		X\zz X = I_p.
	\end{equation*}
	Here, $\mathrm{grad}\, f(X)$ denotes 
	the Riemannian gradient \cite{Absil2008} of $f$ at $X$,
	and $\sym (B) = (B + B\zz) / 2$ represents 
	the symmetric part of the given matrix $B$.
\end{definition}


In the decentralized optimization, 
the structure of the communication network $\tG$
is described by a mixing matrix $W = [W(i, j)] \in \bR^{d \times d}$.
We assume that $W$ satisfies the following properties,
which are standard in the literature.
There are a few common choices for the mixing matrix $W$.
Interested readers are referred to \cite{Xiao2004,Shi2015} for more details.

\begin{assumption} \label{asp:network}
	The mixing matrix $W$ is symmetric, nonnegative, 
	and doubly stochastic (i.e., $W \mathbf{1}_d = W\zz \mathbf{1}_d = \mathbf{1}_d$).
	Moreover, $W(i, j) = 0$ if $i \neq j$ and $(i, j) \notin \tE$.
\end{assumption}


\subsection{Existing Works}

\label{subsec:existing}

In this subsection, we briefly review some existing approaches 
closely related to the present work.

\paragraph{Riemannian SVRG methods.}
Recent years have witnessed the rapid development of optimization algorithms
on the Riemannian manifolds.
Interested readers are referred to a comprehensive survey \cite{Hu2020brief} for more details.
In the following, we briefly review several Riemannian SVRG algorithms.
Based on the geometric tools developed in \cite{Absil2008},
a large amount of efforts has been devoted to extending the SVRG-type algorithm
from the Euclidean space to the Riemannian manifolds.
Existing SVRG algorithms for manifold optimization can be divided into two categories.
The first category constructs stochastic variance reduced Riemannian gradients
by invoking parallel transports or vector transports \cite{Absil2008},
including \cite{Zhang2016riemannian,Sato2019riemannian}.
As illustrated in \cite{Qi2010riemannian},
the evaluation of parallel transports or vector transports
is computationally expensive.
Therefore, the second category of Riemannian SVRG algorithms, 
such as \cite{Jiang2017vector,Liu2019b},
is proposed to waive the computation of these two operations.
However, the aforementioned methods 
can hardly be extended to the decentralized setting.

\paragraph{Decentralized algorithms in the Euclidean space.}
Decentralized optimization has been adequately investigated in the Euclidean space
with different types of decentralized algorithms emerging in large numbers,
such as decentralized gradient descent (DGD) \cite{Nedic2009,Yuan2016,Zeng2018},
decentralized gradient tracking (DGT) \cite{Shi2015,Xu2015,Qu2017},
decentralized stochastic gradient descent (DSGD) \cite{Lian2017can},
decentralized SVRG method \cite{Sun2020improving,Xin2022fast},
and decentralized primal-dual algorithm
\cite{Chang2015multi,Ling2015,Hong2017,Hajinezhad2019}.
It is worthy of mentioning that none of the above algorithms 
are able to deal with nonconvex constraints,
and thus not applicable to the problem \eqref{eq:opt-dest}.
We refer to three recent survey papers  
\cite{Nedic2018network,Xin2020general,Chang2020distributed}
for a complete review on the decentralized algorithms in the Euclidean space.

\paragraph{Decentralized Riemannian gradient descent methods.}
$ $
To solve decentralized problems over the Riemannian manifold, 
such as problem \eqref{eq:opt-dest},
Chen et al. \cite{Chen2021decentralized} propose two algorithms DRGTA and DRSGD,
which are the Riemannian extensions of DGT and DSGD, respectively.
Although the exact convergence is guaranteed with fixed stepsizes,
full batch gradient computation is involved in each iteration of DRGTA,
giving rise to high sample complexity.
On the contrary, DRSGD requires diminishing stepsizes to find an exact solution,
which significantly slows down the convergence.
This incurs high communication complexity,
since it is necessary for DRSGD to take numerous rounds of communications
to reach certain accuracy.
Finally, we note that both DRGTA and DRSGD require 
$t \geq \lceil O(- \log_{\lambda} (2 \sqrt{d})) \rceil$ rounds of communications 
per iteration to guarantee the convergence.
For sparse networks, $t$ is in the order of $O(d^2 \log d)$ \cite{Chen2021decentralized}.
This prerequisite further imposes an enormous burden on the communications.

\subsection{Our Contributions}



The main contributions of this paper are three folds.

\begin{enumerate}
	
	\item We propose a variance-reduced stochastic gradient tracking (VRSGT) algorithm
	for solving problem \eqref{eq:opt-dest} to reduce both sampling and communication complexities
	at the same time.
	Please see Table \ref{tb:complexity} for detailed comparisons with existing peer methods. 
	
	\item VRSGT is based on two novel techniques, 
	augmented Lagrangian estimation and gradient approximation,
	to deal with orthogonality constraints, 
	which are of independent values. 
	Basically, one can utilize any unconstrained algorithms 
	to solve the problem \eqref{eq:opt-dest} 
	by resorting to these two techniques.
	The convergence of VRSGT is established in  
	Theorem \ref{thm:convergence}.
	
	\item We apply VRSGT to solve dual principal component pursuit (DPCP) problems
	that play an important role
	in autonomous driving.
	VRSGT has the ability to recognize the objects in the road quickly and precisely. 
	
\end{enumerate}

\begin{table}[ht!]
	\centering
	\caption{A comparison of sample and communication complexities 
		to find a first-order $\epsilon$-stationary point (to de defined in Definition \ref{def:epsilon})
		on decentralized algorithms for the problem \eqref{eq:opt-dest}.}
	\label{tb:complexity} 
	\begin{threeparttable}
		\begin{tabular}{c|c|c} 
			\toprule 
			Algorithm & Sample complexity & Communication complexity \\
			\midrule
			DRGTA \cite{Chen2021decentralized}  &  $O(dl / \epsilon)$  
			&  $O(t / \epsilon)$\tnote{1} \\
			\midrule
			DRSGD \cite{Chen2021decentralized}  &  $O(d / \epsilon^2)$  
			&  $O(t / \epsilon^2)$\tnote{1} \\
			\midrule
			VRSGT (this work)  &  $O(d\sqrt{l} / \epsilon)$  
			&  $O(1 / \epsilon)$ \\
			\bottomrule 
		\end{tabular} 
		\begin{tablenotes}
			\item [1] $t \geq \lceil O(- \log_{\lambda} (2 \sqrt{d})) \rceil$.
			For sparse networks, $t$ can be $O(d^2 \log (d)$ \cite{Chen2021decentralized}.
		\end{tablenotes}
	\end{threeparttable}
\end{table}

	
\subsection{Notations}

The Euclidean inner product of two matrices \(Y_1, Y_2\) 
with the same size is defined as \(\jkh{Y_1, Y_2}=\tr(Y_1\zz Y_2)\),
where $\tr (B)$ stands for the trace of a square matrix $B$.
And the notation $\sym (B) = (B + B\zz) / 2$ represents 
the symmetric part of $B$.
The Frobenius norm and 2-norm of a given matrix \(C\) 
is denoted by \(\norm{C}\ff\) and \(\norm{C}_2\), respectively. 
The $(i, j)$-th entry of a matrix $C$ is represented by $C(i, j)$.
The notation $\bfone_d$ stands for the $d$-dimensional vector of all ones.
The Kronecker product is denoted by $\otimes$.
Given a differentiable function \(g(X) : \Rnp \to \bR\), 
the gradient of \(g\) with respect to \(X\) is represented by \(\nabla g(X)\).
Further notation will be introduced wherever it occurs.

\section{Algorithm Design}

In this section, we develop two novel techniques 
to deal with the nonconvex orthogonality constraints.
Based on these techniques, 
we propose a variance-reduced stochastic gradient tracking algorithm. 

\subsection{Augmented Lagrangian Estimation}

\label{subsec:augmented}

The augmented Lagrangian function of \eqref{eq:opt-dest} can be represented as follows.
\begin{equation*}
	\cL(\bfX, \bfLambda) := \dfrac{1}{d} \sumiid \cL_i (X_i, \Lambda_i),
\end{equation*}
where
\begin{equation*}
	\cL_i (X_i, \Lambda_i)  := 
	f_i (X_i) - \dfrac{1}{2} \jkh{\Lambda_i, X_i\zz X_i - I_p} 
	+ \dfrac{\beta}{4} \norm{X_i\zz X_i - I_p}\fs.
\end{equation*}
In the above augmented Lagrangian function,
we only penalize the orthogonality constraints \eqref{eq:con-orth}.
Here, $\beta > 0$ is a penalty parameter,
$\Lambda_i \in \Rpp$ denotes the local Lagrangian multiplier 
corresponding to the orthogonality constraint of $X_i$,
and $\bfLambda = [\Lambda_1\zz, \dotsc, \Lambda_d\zz]\zz \in \bR^{dp \times p}$
stacks all the local multipliers.

%


According to the discussions in \cite{Gao2019,Xiao2020,Wang2021multipliers},
the multiplier associated with orthogonality constraints admits a closed-form expression
at any first-order stationary point.
Therefore, it is straightforward to verify that 
each local multiplier admits the following explicit expression.
\begin{equation*}
	\Lambda_i = \sym \dkh{X_i\zz \nabla f_i (X_i)}, \iid.
\end{equation*}
The second term of $\cL_i (X_i, \Lambda_i)$ with the above expression of $\Lambda_i$
can be rewritten as the following form.
\begin{equation*}
	\jkh{\Lambda_i, X_i\zz X_i - I_p} = \jkh{\nabla f_i (X_i), X_i X_i\zz X_i - X_i}.
\end{equation*}
Then in light of the definition of directional derivative, we have
\begin{equation}\label{eq:approximate}
	\lim_{\sigma \to 0} \dfrac{f_i (X_i + \sigma (X_i X_i\zz X_i - X_i)) - f_i (X_i)}{\sigma}
	= \jkh{\nabla f_i (X_i), X_i X_i\zz X_i - X_i}.
\end{equation}
For convenience, we denote
\begin{equation} \label{eq:y_sigma}
	\cA (X_i) := X_i + \sigma (X_i X_i\zz X_i - X_i)
	= (1 - \sigma) X_i + \sigma X_i X_i\zz X_i.
\end{equation}
This motivates us to replace the second term of $\cL_i (X_i, \Lambda_i)$ 
with $(f_i (\cA (X_i)) - f_i (X_i)) / \sigma$
for a small constant $\sigma \neq 0$,
and the following function is obtained.
\begin{equation*}
	h_{i} (X_i) := g_{i} (X_i) + \beta b (X_i),
\end{equation*}
where
\begin{equation*}
	g_{i} (X) := \dkh{1 + \dfrac{1}{2 \sigma}} f_i (X_i) 
	- \dfrac{1}{2 \sigma} f_i (\cA (X_i)),
	\mbox{~and~}
	b (X_i) := \dfrac{1}{4} \norm{X_i\zz X_i - I_p}\fs.
\end{equation*}
Finally, the augmented Lagrangian function $\cL(\bfX, \bfLambda)$ 
can be estimated by the following function.
\begin{equation} \label{eq:ALE}
	h (\bfX) = \dfrac{1}{d} \sumiid h_{i} (X_i),
\end{equation}
which is called {\it augmented Lagrangian estimation}.

\subsection{Gradient Approximation}

\label{subsec:gradient}

Now we discuss the computation related to the gradient of 
the augmented Lagrangian estimation $h (\bfX)$.
After straightforward calculations, we have
\begin{equation*}
	\nabla h (\bfX) =
	[\nabla h_{1} (X_1)\zz, \dotsc, \nabla h_{d} (X_d)\zz]\zz,
\end{equation*}
where
\begin{equation} \label{eq:gradient}
	\begin{aligned}
		\nabla h_{i} (X_i) 
		= & \dkh{1 + \dfrac{1}{2 \sigma}} \nabla f_i (Z) |_{Z = X_i} 
		- \nabla f_i (Z)|_{Z = \cA (X_i)} 
		\dkh{\dfrac{1 - \sigma}{2 \sigma} I_p + \dfrac{1}{2} X_i\zz X_i}\\
		& - X_i \sym \dkh{ X_i\zz \nabla f_i (Z) |_{Z = \cA (X_i)} }
		+ \beta X_i (X_i\zz X_i - I_p).
	\end{aligned}
\end{equation}
The gradient of $f_i$ is evaluated twice
at different points $X_i$ and $\cA (X_i)$ in $\nabla h_{i} (X_i) $,
which incurs high computational costs.
We find that these two points are very close 
as long as the orthogonality violation of $X_i$ is small 
since $\norm{\cA (X_i) - X_i}\ff 
\leq \sigma \norm{X_i}_2 \norm{X_i\zz X_i - I_p}\ff$.
Therefore, we propose to approximate $\nabla f_i (Z) |_{Z = \cA (X_i)}$
by $\nabla f_i (Z) |_{Z = X_i} $ to reduce the computational costs.
This yields the following direction $H_{i} (X_i) \in \Rnp$
as an approximation of $\nabla h_{i} (X_i)$.
\begin{equation} \label{eq:Hi}
		H_{i} (X_i) := G_{i} (X_i) + \beta E (X_i),
\end{equation}
where $E (X_i) := X_i (X_i\zz X_i - I_p)$ and
\begin{equation} \label{eq:Gi}
	G_i (X_i) := \nabla f_i (Z)|_{Z = X_i} 
	\dkh{\dfrac{3}{2} I_p - \dfrac{1}{2} X_i\zz X_i}
	- X_i \sym \dkh{ X_i\zz \nabla f_i (Z) |_{Z = X_i} } 
\end{equation}
Let $H (X) := \sum_{i = 1}^d H_i (X) / d$ and $G (X) := \sum_{i = 1}^d G_i (X) / d$.
We have the following lemma for $H (X)$ and $G (X)$.

\begin{lemma} \label{le:expen}
	Let $\cR := \{X \in \Rnp \mid \|X\zz X - I_p\|\ff \leq 1 / 6\}$
	be a bounded region 
	and $M := \sup \{ \| \nabla f (X) \|\ff  
	\mid X \in \cR\}$ 
	be a positive constant.
	Then if $\beta \geq (6 + 21 M) / 5$, we have
	\begin{equation*}
		\norm{H (X)}\fs \geq \norm{G (X)}\fs 
		+ \beta \norm{X\zz X - I_p}\fs,
	\end{equation*}
	for any $X \in \cR$.
\end{lemma}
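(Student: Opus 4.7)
The plan is to expand $\norm{H(X)}\fs = \norm{G(X) + \beta E(X)}\fs$ and show that the cross and quadratic terms dominate $\beta \norm{X\zz X - I_p}\fs$. Writing $S := X\zz X - I_p$ so that $E(X) = XS$, the inequality reduces, after dividing by $\beta > 0$, to establishing
\[
\beta \norm{E(X)}\fs + 2 \jkh{G(X), E(X)} \ge \norm{S}\fs.
\]
The approach is threefold: (i) derive a lower bound on $\norm{E(X)}\fs$; (ii) reduce $\jkh{G(X), E(X)}$ to a clean trace in $S^2$ via symmetry; and (iii) combine the two using the stated lower bound on $\beta$.

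For step (i), cyclicity of the trace gives $\norm{E(X)}\fs = \tr(S X\zz X S) = \norm{S}\fs + \tr(S^3)$, and the crude bound $\abs{\tr(S^3)} \le \norm{S}_2 \norm{S}\fs \le \norm{S}\ff \cdot \norm{S}\fs$ together with $\norm{S}\ff \le 1/6$ on $\cR$ yields $\norm{E(X)}\fs \ge \tfrac{5}{6} \norm{S}\fs$. For step (ii), since every $G_i$ in the sum defining $G(X)$ is evaluated at the same $X$, substituting $X\zz X = I_p + S$ collapses the dressing matrix to $(3/2)I_p - (1/2)X\zz X = I_p - \tfrac{1}{2} S$, giving $G(X) = \nabla f(X) \dkh{I_p - \tfrac{1}{2} S} - X \sym(X\zz \nabla f(X))$, with $\nabla f(X) := \tfrac{1}{d} \sumiid \nabla f_i(X)$. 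Setting $A := X\zz \nabla f(X)$ and $B := \sym(A)$, left-multiplication by $X\zz$ produces $X\zz G(X) = (A - B) - \tfrac{1}{2} A S - S B$, whence
\[
\jkh{G(X), E(X)} = \tr(S X\zz G(X)) = \tr(S(A - B)) - \tfrac{1}{2} \tr(S A S) - \tr(S^2 B).
\]
Two symmetry cancellations then occur: $\tr(S(A - B)) = 0$ because $S$ is symmetric while $A - B = (A - A\zz)/2$ is antisymmetric; and cyclic rewriting $\tr(SAS) = \tr(S^2 A) = \tr(S^2 B)$ by the same antisymmetry argument applied to $S^2$. The cross term therefore collapses to $\jkh{G(X), E(X)} = -\tfrac{3}{2} \tr(S^2 B)$.

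For step (iii), Cauchy--Schwarz gives $\abs{\tr(S^2 B)} \le \norm{B}\ff \norm{S^2}\ff \le \norm{B}\ff \norm{S}\fs$ via $\norm{S^2}\ff \le \norm{S}_2 \norm{S}\ff$, and $\norm{B}\ff \le \norm{X}_2 \norm{\nabla f(X)}\ff \le \sqrt{7/6}\, M$ because $\norm{X}_2^2 \le 1 + \norm{S}_2 \le 7/6$ and $\norm{\nabla f(X)}\ff \le M$ on $\cR$. Assembling,
\[
\beta \norm{E(X)}\fs + 2 \jkh{G(X), E(X)} \ge \dkh{\tfrac{5 \beta}{6} - 3 \sqrt{7/6}\, M} \norm{S}\fs \ge \norm{S}\fs,
\]
the final step following from $\beta \ge (6 + 21 M)/5$ together with the elementary inequality $\sqrt{7/6} \le 7/6$. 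I expect the main obstacle to be step (ii): correctly exploiting the symmetric/antisymmetric decomposition of $A$ so that the cross term reduces to a single multiple of $\tr(S^2 B)$, which is crucially second-order in $S$ and thus absorbable by the $\beta$-term. The remaining estimates are routine spectral bookkeeping on the compact region $\cR$.
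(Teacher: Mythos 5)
Your proposal is correct and follows essentially the same route as the paper's proof: expand $\norm{H(X)}\fs=\norm{G(X)}\fs+2\beta\jkh{G(X),E(X)}+\beta^2\norm{E(X)}\fs$, establish the identity $\jkh{G(X),E(X)}=-\tfrac{3}{2}\tr\bigl((X\zz X-I_p)^2\,\sym(X\zz\nabla f(X))\bigr)$ via the symmetric/antisymmetric cancellation, lower-bound $\norm{E(X)}\fs\ge\tfrac{5}{6}\norm{X\zz X-I_p}\fs$, and absorb the cross term using $\beta\ge(6+21M)/5$ and $\sqrt{7/6}\le 7/6$. The only cosmetic difference is that you obtain the $5/6$ factor from $\abs{\tr(S^3)}\le\norm{S}\ff^3$ while the paper uses $\sigma_{\min}^2(X)\ge 5/6$; both yield the same constant.
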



The proof of Lemma \ref{le:expen} is arranged to Appendix \ref{apx:expen}. 
It is noteworthy that $G (X)  = \mathrm{grad}\, f (X)$ 
for any $X \in \Snp$.
Hence, $G (\cdot)$ can be interpreted as 
the generalized Riemannian gradient of $f$ in the Euclidean space.
Furthermore, Lemma \ref{le:expen} shows that
both the orthogonality violation of $X$ and the norm of $G (X)$
is controlled by the norm of $H (X)$
as long as $X$ is restricted in the bounded region $\Omega$.
Consequently, under the decentralized setting, 
$H_{i} (\cdot) $ can be employed as a local descent direction
since the substationarity and orthogonality violation can be reduced simultaneously.
In addition, for the finite sum setting \eqref{eq:finite-sum} considered in this paper, we have
\begin{equation*}
	H_{i} (X_i) = \dfrac{1}{l} \sum_{j = 1}^l H^{[j]}_{i} (X_i),
\end{equation*}
where $H^{[j]}_{i} (X_i) 
= \nabla f^{[j]}_{i} (Z)|_{Z = X_i} ( 3 I_p - X_i\zz X_i ) / 2
- X_i \sym ( X_i\zz \nabla f^{[j]}_{i} (Z) |_{Z = X_i} ) + \beta E (X_i)$.

Finally, under the decentralized setting, 
we introduce the following stationarity gap $\mathtt{StaGap} (\bfX)$ for \eqref{eq:opt-dest}.
\begin{equation} \label{eq:optgap}
	\mathtt{StaGap} (\bfX) 
	:= {} \norm{\dfrac{1}{d} \sumiid G_i (\barX)}\fs
	+ \dfrac{1}{d} \sumiid \norm{X_i - \barX}\fs 
	+ \norm{\barX\zz \barX - I_p}\fs,
\end{equation}
where $\barX := \sum_{r = 1}^d X_r / d$.

Based on the stationarity gap, we define the following 
first-order $\epsilon$-stationary point of \eqref{eq:opt-dest}.

\begin{definition} \label{def:epsilon}
	For a sufficiently small constant $\epsilon > 0$,
	a point $\bfX = [X_1\zz, \dotsc, X_d\zz]\zz$ 
	is called a first-order $\epsilon$-stationary point of \eqref{eq:opt-dest} 
	if and only if $\mathtt{StaGap} (\bfX) \leq \epsilon$.
\end{definition}

\subsection{Algorithmic Development}

Based on the augmented Lagrangian estimation \eqref{eq:ALE}
developed in Subsection \ref{subsec:augmented},
\eqref{eq:opt-dest} can be converted to the following unconstrained problem.
\begin{equation}\label{eq:opt-ale}
	\begin{aligned}
		\min\limits_{ \bfX } \hspace{4mm} & h (\bfX) \\
		\st \hspace{2.5mm} &  X_i = X_j, \hspace{2mm} (i, j) \in \tE, \\
	\end{aligned}
\end{equation}
Now we can design decentralized algorithms to solve \eqref{eq:opt-ale}
without consideration of orthogonality constraints.
The gradient approximation techniques developed in Subsection \ref{subsec:gradient}
can be applied to reduce the computational costs.

To introduce our algorithm, 
we first define two auxiliary local variables $S_i \in \Rnp$ and $D_i \in \Rnp$.
Specifically, $S_i$ aims at estimating the local full batch descent direction
$\sum_{j = 1}^l H^{[j]}_{i} (X_i) / l$ by only using a subset of samples,
while $D_i$ is designed to track the global descent direction 
$\sum_{r = 1}^d H_{r} (X_r) / d$ based on $S_i$.
After the local and global descent direction estimates are obtained,
our algorithm performs a local update along the direction of $- D_i$.

In the following discussions,  
we use $X^{(k, t)}_i$ to denote the $t$-th inner iterate of $X_i$ at the $k$-th outer iteration.
Other notations $S^{(k, t)}_i$ and $D^{(k, t)}_i$ are similar.
Please see the main steps of our algorithm below.

\paragraph{Step 1: $\bfX$-update.}
At the $k$-th outer iteration, each agent $i$ first performs a descent 
in the direction of the global descent direction estimate $D^{(k, 0)}_i$,
and then combines the results with its neighbors.
\begin{equation*}
	X^{(k + 1, 1)}_i = \sum_{r = 1}^d W(i, r) \dkh{X^{(k, 0)}_r 
		- \eta D^{(k, 0)}_r},
\end{equation*}
where $\eta > 0$ is the stepsize.
The inner iteration is similar.

\paragraph{Step 2: $\bfS$-update.}
At the $k$-th outer iteration,
each agent $i$ directly computes the local full batch descent direction as follows.
\begin{equation*}
	S^{(k + 1, 1)}_i = H_{i} (X^{(k + 1, 1)}_i).
\end{equation*}
In contrast, the $t$-th inner iteration estimates it 
by using a subset $\cC^{(k+ 1, t)}$ of $\tau$ samples.
\begin{equation*}
	S^{(k + 1, t + 1)}_i = \dfrac{1}{\tau} \sum_{j \in \cC^{(k + 1, t)}} 
	\dkh{H^{[j]}_{i} (X^{(k + 1, t + 1)}_i) - H^{[j]}_{i} (X^{(k + 1, t)}_i)} 
	+ S^{(k + 1, t)}_i,
\end{equation*}
where $\tau \in [1, l]$ is the batch size.

\paragraph{Step 3: $\bfD$-update.}
At the $k$-th outer iteration,
each agent $i$ combines the previous estimate $D^{(k, 0)}_i$ with its neighbors,
and then make a new estimation based on the fresh information $S^{(k + 1, 1)}_i$.
\begin{equation*}
	D^{(k + 1, 1)}_i = \sum_{r = 1}^d W(i, r) D^{(k, 0)}_r 
	+ S^{(k + 1, 1)}_i - S^{(k, 0)}_i.
\end{equation*}
The inner iteration is similar.

We summarize the proposed algorithm as a compact form in Algorithm \ref{alg:VRSGT},
which is called variance-reduced stochastic gradient tracking
and abbreviated to VRSGT.
In Algorithm \ref{alg:VRSGT},
we use $\bfX^{(k, t)} \in \bR^{dn \times p}$, $\bfS^{(k, t)}  \in \bR^{dn \times p}$, 
$\bfD^{(k, t)} \in \bR^{dn \times p}$,
$\bfH (\bfX^{(k, t)}) \in \bR^{dn \times p}$, 
and $\bfH^{[j]} (\bfX^{(k, t)}) \in \bR^{dn \times p}$ to denote
the concatenation of $X^{(k, t)}_i \in \Rnp$, $S^{(k, t)}_i \in \Rnp$, 
$D^{(k, t)}_i \in \Rnp$, $H_{i} (X^{(k, t)}_i)\in \Rnp$,
and $H^{[j]}_{i} (X^{(k, t)}_i) \in \Rnp$ across all the agents, respectively.
For convenience, we denote $\bfW = W \otimes I_n \in \bR^{dn \times dn}$.

\begin{algorithm}[ht!]
	\caption{Variance-Reduced Stochastic Gradient Tracking (VRSGT).} 
	\label{alg:VRSGT}
	
	\KwIn{initial guess $X_{\mathrm{init}}$, 
		stepsize $\eta > 0$, batch size $\tau \in [1, l]$,
		inner iteration interval $q > 0$,
		and penalty parameter $\beta > 0$.}
		
	Set $k := 0$.
	
	Initialize $\bfX^{(0, 0)} = (\bfone_d \otimes I_n) X_{\mathrm{init}}$,
	$\bfS^{(0, 0)} := \bfH (\bfX^{(0, 0)})$ 
	and $\bfD^{(0, 0)} := \bfH (\bfX^{(0, 0)})$. 
	
	\While{``not converged"}
	{
		
		$\bfX^{(k + 1, 1)} :=  \bfW (\bfX^{(k, 0)} - \eta \bfD^{(k, 0)})$.
		
		$\bfS^{(k + 1, 1)} := \bfH (\bfX^{(k + 1, 1)})$.
			
		$\bfD^{(k + 1, 1)} :=  \bfW \bfD^{(k, 0)} + \bfS^{(k + 1, 1)} - \bfS^{(k, 0)}$.
		
		\For{$t = 1, 2, \dotsc, q$}
		{
		
			$\bfX^{(k + 1, t + 1)} :=  \bfW (\bfX^{(k + 1, t)} - \eta \bfD^{(k + 1, t)})$.
	
			Each node draws a subset $\cC^{(k + 1, t)}$ of $\tau$ samples 
			from $[l]$ with replacement.
			
			$\bfS^{(k + 1, t + 1)} := \sum_{j \in \cC^{(k + 1, t)}} 
			\dkh{ \bfH^{[j]} (\bfX^{(k + 1, t + 1)}) 
			- \bfH^{[j]} (\bfX^{(k + 1, t)}) } / \tau 
			+ \bfS^{(k + 1, t)}$.
			
			$\bfD^{(k + 1, t + 1)} :=  \bfW \bfD^{(k + 1, t)} 
			+ \bfS^{(k + 1, t + 1)} - \bfS^{(k + 1, t)}$.
			
		}
	
		Set $\bfX^{(k + 1, 0)} := \bfX^{(k + 1, q + 1)}$, 
		$\bfS^{(k + 1, 0)} := \bfS^{(k + 1, q + 1)}$, 
		and $\bfD^{(k + 1, 0)} := \bfD^{(k + 1, q + 1)}$.
		
		Set $k := k + 1$.
		
	}

	\KwOut{$\bfX^{(k, 0)}$.}
	
\end{algorithm}

The total communication rounds required by VRSGT 
is in the same order as the total number of iterations (both inner and outer iterations), 
since only two rounds of communications are performed per iteration,
via broadcasting the local variables $X_i$ and $D_i$ to the neighbors.
And the total number of samples used per iteration 
is either $d \tau$ (inner iteration) or $dl$ (outer iteration).

\section{Convergence Analysis}

\label{sec:convergence}

Throughout this paper, 
we make the following blanket assumptions about local cost functions.
\begin{assumption} \label{asp:smooth}
	Each local cost function $f^{[j]}_{i}$ is first-order differentiable. 
	And $\{ f^{[j]}_{i} \}_{j = 1}^l$ satisfies the following 
	mean-squared local Lipschitz smoothness property with a constant $L > 0$.
	\begin{equation} \label{eq:Lipschitz}
		\sqrt{\dfrac{1}{l} \sum_{j = 1}^l 
		\norm{\nabla f^{[j]}_{i} (X) - \nabla f^{[j]}_{i} (Z)}\fs} 
		\leq L \norm{X - Z}\ff, \quad
		\mbox{~for any~} i \in [d]
		\mbox{~and~} X, Z \in \tilde{\mathcal{S}},
	\end{equation}
	where $\tilde{\mathcal{S}}$ is an arbitrary bounded set containing $\Snp$.
\end{assumption}

Assumption \ref{asp:smooth} is standard 
in the decentralized stochastic optimization literature,
which is also used in \cite{Sun2020improving,Xin2022fast}.
It should be noted that, in these works,
the condition \eqref{eq:Lipschitz} is assumed to hold for any $X, Z \in \Rnp$.
Since the Stiefel manifold is compact, 
this {\it global} Lipschitz smoothness property is too stringent for \eqref{eq:opt-dest}.
Therefore, in Assumption \ref{asp:smooth},
we only impose the {\it local} version of this property.

Now we present the global convergence of VRSGT as follows,
whose proof is given in Appendix \ref{apx:convergence}.

\begin{theorem} \label{thm:convergence}
	Suppose Assumptions \ref{asp:network} and \ref{asp:smooth} hold.
	Let $\{\bfX^{(k, t)}\}$ be the iterate sequence generated by Algorithm \ref{alg:VRSGT} 
	with $X_{\mathrm{init}} \in \cR$, $\eta \in (0, \bar{\eta})$, $\beta > \underline{\beta}$,
	and $\tau = q = \sqrt{l}$.
	The constants $\bar{\eta}$ and $\underline{\beta}$ 
	are given in Appendix \ref{apx:convergence}.
	Then it holds that
	\begin{equation*}
		\min_{\substack{k = 0, 1, \dotsc, K \\ t = 0, 1, \dotsc, q}} 
		\bE \fkh{ \mathtt{StaGap} (\bfX^{(k, t)}) } 
		\leq \dfrac{C}{\sqrt{l} K},
	\end{equation*}
	where $C > 0$ is a constant defined in Appendix \ref{apx:convergence}
	and the expectation is taken over the randomness from the sampling step.
\end{theorem}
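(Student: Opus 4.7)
The plan is to build a Lyapunov/potential function that aggregates four quantities and show it telescopes to give the stated $O(1/(\sqrt{l}K))$ rate, then invoke Lemma \ref{le:expen} to convert the bound into one on $\mathtt{StaGap}$.

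First I would record the algorithm's invariants. Because $W$ is doubly stochastic, the averaged updates satisfy $\barXkn := \frac{1}{d}\sum_i X^{(k+1,t)}_i = \barXk - \eta \barDk$, and by the $\bfS$-/$\bfD$-updates one has $\barDk = \barSk$ for all $(k,t)$ with $\barSk = \tfrac{1}{d}\sum_i S^{(k,t)}_i$. In particular, $\bE[\barSkn \mid \mathcal F^{(k,t)}] = \barSk + \bE[\tfrac{1}{d}\sum_i \tfrac{1}{\tau}\sum_{j\in\cC}(H^{[j]}_i(X^{(k+1,t+1)}_i)-H^{[j]}_i(X^{(k+1,t)}_i))]$, so $\barSkn - \overline{\bfH}(\bfXkn)$ is a martingale-type perturbation that gets refreshed to zero every outer iteration. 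The smoothness of the local $H^{[j]}_i$ on a bounded set (which follows from Assumption \ref{asp:smooth} plus the polynomial expression \eqref{eq:gradient}) then yields, via the standard SVRG-style telescoping over an epoch of length $q=\sqrt{l}$, a bound of the form
\begin{equation*}
\bE\,\norm{\bfSk - \bfH(\bfXk)}\fs \;\le\; \frac{\tilde L^2}{\tau}\sum_{s=1}^{t}\bE\,\norm{\bfX^{(k,s)}-\bfX^{(k,s-1)}}\fs,
\end{equation*}
where $\tilde L$ depends on $L$, $\beta$, and the diameter of $\cR$.

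Next I would run a standard gradient-tracking analysis. Let $\bfJ = \tfrac{1}{d}\bfone_d\bfone_d^{\top}\otimes I_n$ and use the spectral gap $\lambda := \|\bfW - \bfJ\|_2 < 1$. Then the consensus error $U^{(k,t)} := \|\bfXk - \bfJ\bfXk\|\fs$ and the tracking error $V^{(k,t)} := \|\bfDk - \bfJ\bfDk\|\fs$ admit coupled one-step recursions of the standard form
\begin{align*}
U^{(k,t+1)} &\le \tfrac{1+\lambda^2}{2}U^{(k,t)} + c_1\eta^2 V^{(k,t)},\\
V^{(k,t+1)} &\le \tfrac{1+\lambda^2}{2}V^{(k,t)} + c_2\,\bE\,\norm{\bfH(\bfXkn)-\bfH(\bfXk)}\fs + c_3\,\bE\,\norm{\bfSkn-\bfH(\bfXkn)}\fs,
\end{align*}
with the last term handled via the variance bound from the previous paragraph. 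A descent inequality for $h(\barXk)$ then comes from the smoothness of $h$ on $\cR$ (again from \eqref{eq:gradient}): using $\barXkn = \barXk - \eta\barSk$ and $\barSk = \overline{\bfH}(\bfXk) + (\barSk-\overline{\bfH}(\bfXk))$, the cross-term is split using $\nabla h(\barXk) = \overline{\bfH}(\bfXk)$ up to terms controlled by $U^{(k,t)}$ (via Lipschitzness of $H_i$) and by the variance term. Choosing $\eta$ small enough, a carefully weighted potential
\begin{equation*}
\Phi^{(k,t)} := h(\barXk) - h^{\star} + \alpha_1 U^{(k,t)} + \alpha_2 V^{(k,t)} + \alpha_3\,\bE\,\norm{\bfSk - \bfH(\bfXk)}\fs
\end{equation*}
can be shown to satisfy $\bE[\Phi^{(k,t+1)}] \le \bE[\Phi^{(k,t)}] - \gamma\bE[\|\overline{\bfH}(\bfXk)\|\fs + U^{(k,t)} + \|\barXk{}^{\top}\barXk - I_p\|\fs]$, where the orthogonality term comes out of Lemma \ref{le:expen} after using $\|\overline{\bfH}(\bfXk)\|\fs \ge \|H(\barXk)\|\fs - O(U^{(k,t)})$.

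The main obstacle, and the place I would spend most of the work, is the \emph{invariance of $\cR$}: Lemma \ref{le:expen} only applies while every $X_i^{(k,t)} \in \cR$, so I cannot blindly invoke global smoothness. The fix is inductive — assume $\bfXk$ lies in $\cR^d$, derive the one-step potential decrease, and use the fact that the decrease in $\Phi$ dominates the orthogonality violation squared to show that $\|X_i^{(k,t)\top}X_i^{(k,t)} - I_p\|\ff$ cannot cross the threshold $1/6$ starting from $X_{\mathrm{init}}\in\cR$. Once the invariance is secured, summing the potential inequality from $k=0,t=0$ up to $K,q$ gives
\begin{equation*}
\sum_{k=0}^{K}\sum_{t=0}^{q}\bE\,\mathtt{StaGap}(\bfXk) \;\le\; \frac{\Phi^{(0,0)}-\Phi^{\star}}{\gamma},
\end{equation*}
and dividing by the total iteration count $(K+1)(q+1) = \Theta(K\sqrt{l})$ delivers the claimed $O(1/(\sqrt{l}K))$ bound. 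The constants $\bar\eta$ and $\underline\beta$ are precisely those needed to make the $\alpha_i$ positive and to apply Lemma \ref{le:expen}.
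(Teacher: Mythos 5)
Your overall architecture (SVRG-style variance bound, gradient-tracking consensus recursions, a weighted potential that telescopes, then Lemma \ref{le:expen} to pass from $\norm{H(\barX^{(k,t)})}\fs$ to $\mathtt{StaGap}$) matches the shape of the paper's argument, except that the paper does not re-derive the decentralized SVRG rate: once boundedness is secured it simply invokes Theorem 1 of Sun et al.\ (Proposition \ref{prop:descent}), and your final conversion step is essentially identical to the paper's proof of Theorem \ref{thm:convergence}. The problem is the step you yourself identify as the crux, the invariance of $\cR$, where your proposed fix does not work.

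You claim that ``the decrease in $\Phi$ dominates the orthogonality violation squared'' and that this prevents $\norm{\barX^{\top}\barX - I_p}\ff$ from crossing $1/6$. Two things go wrong. First, your potential inequality holds only \emph{in expectation} over the sampling, whereas Lemma \ref{le:expen} and the local Lipschitz constants must be applied \emph{pathwise}: a bound on $\bE[\Phi^{(k,t)}]$ cannot rule out a realization of $\cC^{(k,t)}$ that pushes some iterate out of $\cR$, at which point the smoothness constants underlying the very next potential inequality are no longer valid and the induction collapses. Second, even pathwise, a telescoping decrease $\Phi^{(k,t+1)} \leq \Phi^{(k,t)} - \gamma\norm{\barX^{(k,t)\top}\barX^{(k,t)} - I_p}\fs$ only yields $\norm{\barX^{(k,t)\top}\barX^{(k,t)} - I_p}\fs \leq \Phi^{(0,0)}/\gamma$, and nothing forces $\Phi^{(0,0)}/\gamma \leq 1/36$ when $X_{\mathrm{init}}$ is merely in $\cR$ (violation up to $1/6$) --- so the threshold can be crossed. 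The paper's mechanism is entirely different and deterministic: writing $\barX^{(k,t+1)} = \barX^{(k,t)} - \eta\beta E(\barX^{(k,t)}) + \eta Y_k$ with $\norm{Y_k}\ff \leq 1 + (2q+1)M_G$ bounded \emph{surely} (Lemmas \ref{le:T} and \ref{le:bound-T}), it shows that the map $X \mapsto X - \eta\beta X(X^{\top}X - I_p)$ contracts the orthogonality violation by a factor $(1 - \tfrac{5}{6}\eta\beta)^2$, and a two-case analysis (violation below or above $1/12$) shows the perturbation cannot push it past $1/6$ when $\beta$ is large enough relative to the perturbation (Lemma \ref{le:bound-X-inner}, Proposition \ref{prop:bound}). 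Without this contraction argument, or some equivalent almost-sure control, your induction has a genuine hole.
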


Theorem \ref{thm:convergence} demonstrate that, 
in order to achieve a first-order $\epsilon$-stationary point of \eqref{eq:opt-dest} by VRSGT,
the total number of communication rounds is in the order of $O (1 / \epsilon)$,
and the total number of samples evaluated across the whole network is in the order of $O (d \sqrt{l} / \epsilon)$.

\section{Numerical Experiments}

\label{sec:experiments}

In this section, the numerical performance of VRSGT is evaluated 
through comprehensive numerical experiments in comparison with 
state-of-the-art algorithms.
The corresponding experiments are performed on a workstation
with two Intel Xeon Gold 6242R CPU processors (at $3.10$GHz$\times 20 \times 2$) 
and 510GB of RAM under Ubuntu 20.04.
All the tested algorithms are implemented in the \texttt{Python} language,
and the communication is realized via the package \texttt{mpi4py}.
The code of DRSGD \cite{Chen2021decentralized} is downloaded from
GitHub\footnote{\url{https://github.com/chenshixiang/Decentralized_Riemannian_gradient_descent_on_Stiefel_manifold}}.



\subsection{Test Problems}

\label{subsec:test-problems}

In the numerical experiments, 
we test two different types of problems to evaluate the performances of compared algorithms,
including decentralized (principal component analysis) PCA problem 
and dual principal component pursuit (DPCP) problem.

\paragraph{Decentralized PCA Problem.}
As a fundamental tool for dimensionality reduction,
PCA is usually a critical procedure or preprocessing step 
in a large number of numerous statistical and machine learning tasks.
under the decentralized setting, PCA amounts to solving the following optimization problem.
\begin{equation}\label{eq:opt-pca}
	\begin{aligned}
		\min\limits_{X \in \Rnp} \hspace{2mm} 
		& -\frac{1}{2d} \sumiid \dkh{
			\dfrac{1}{l}\sum_{j = 1}^ l \tr \dkh{ X\zz a_{i (j)} a_{i (j)}\zz X} }\\
		\st \hspace{3mm} & X \in \Snp,
	\end{aligned}
\end{equation}
where $a_{i (j)} \in \Rn$ is the $j$-th sample owned by the $i$-th agent.  
For convenience, 
we denote $A_i = [a_{i (1)}, a_{i (2)}, \dotsc, a_{i (l)}] \in \bR^{n \times l}$
as the local data matrix
and $A = [A_1, A_2, \dotsc, A_d] \in \Rnm$
as the global data matrix with $m = dl$.

\paragraph{Decentralized DPCP Problem.}
DPCP \cite{Zhu2018dual,Ding2019noisy} 
is a recently proposed model for learning subspaces of high relative dimension 
from datasets contaminated by outliers,
which has wide applications in the computer vision, 
such as detecting planar structures in 3D point clouds 
in KITTI dataset \cite{Geiger2013vision} 
and estimating relative poses in multiple-view geometry \cite{Hartley2003multiple}.
Specifically, DPCP aims to recover an $n - 1$ dimensional hyperplane in $\Rn$
by a normal vector 
from $m$ samples stored in the matrix $B \in \Rnm$ to distinguish inliers and outliers.
Under the decentralized setting, we assume that $B = [B_1, B_2, \dotsc, B_d]$
with $B_i = [b_{i (1)}, b_{i (2)}, \dotsc, b_{i (l)}]$,
where $B_i \in \bR^{n \times l}$ contains all the $l$ samples owned by the $i$-th agent,
and $b_{i (j)} \in \Rn$ is the $j$-th sample of $B_i$.
Recently, the following optimization problem with orthogonality constraints 
is proposed in \cite{Hu2020efficient} for DPCP.
\begin{equation}\label{eq:opt-dpcp}
	\begin{aligned}
		\min\limits_{X \in \Rnp} \hspace{2mm} 
		& -\frac{1}{3d} \sumiid \dkh{
			\dfrac{1}{l} \sum_{j = 1}^l \norm{X\zz b_{i (j)}}_3^3 }\\
		\st \hspace{3mm} & X \in \Snp,
	\end{aligned}
\end{equation}
where the $\ell_3$-norm is defined as $\norm{Y}_3^3 = \sum_{i, j} Y(i, j)^3$.

\subsection{Robustness to Penalty Parameter}

\label{subsec:default}

In this subsection, we show that VRSGT is not sensitive to 
the choices of penalty parameter $\beta$.
The following experiments are performed on the Erdos-Renyi network 
with a fixed probability $\mathtt{prob} = 0.5$, 
and the batch size is set to $10$.

\begin{figure}[ht!]
	\centering
	
	\includegraphics[width=0.3\linewidth]{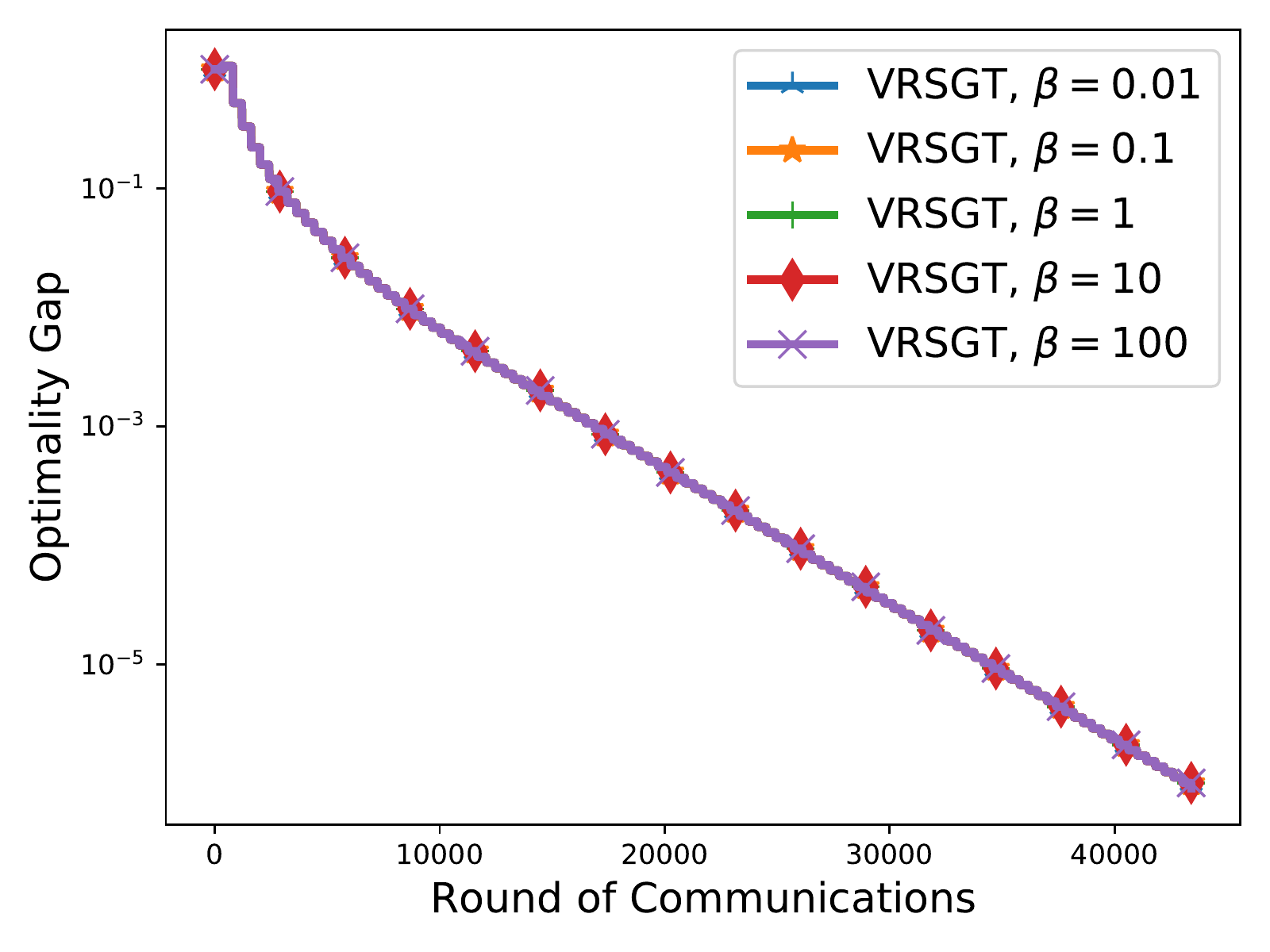}
	
	\caption{Comparison of VRSGT for different values of $\beta$.}
	\label{fig:PCA_beta}
\end{figure}

In the following experiments, the global data matrix $A\in\Rnm$ 
(assuming $n\leq m$ without loss of generality)
is constructed by its (economy-form) singular value decomposition as follows.
\begin{equation}\label{eq:gen-A}
	A = U \Sigma V\zz,
\end{equation}
where both $U \in \Rnn$ and $V \in \Rmn$ are orthogonal matrices 
orthonormalized from randomly generated matrices,
and $\Sigma \in \Rnn$ is a diagonal matrix with diagonal entries 
\begin{equation}\label{eq:Sigma_ii}
	\Sigma_{ii}=\xi^{i / 2}, \quad \iin,
\end{equation}  
for a parameter $\xi \in (0, 1)$ that determines the decay rate of the singular values of $A$.
In general, smaller decay rates (with $\xi$ closer to 1) correspond to more difficult cases. 
Finally, the global data matrix $A$ is uniformly distributed into $d$ agents.



Figure \ref{fig:PCA_beta} depicts the performances of VRSGT 
with different values of $\beta$,
which are distinguished by different colors.
A synthetic matrix $A$, 
generated by \eqref{eq:gen-A} with $n = 200$, $m = 64000$, and $\xi = 0.9$,
is tested with $p = 10$, $d = 32$, and $\eta = 0.0001$.
We can observe that the curves in Figure \ref{fig:PCA_beta} 
almost coincide with each other,
which indicates that VRSGT has almost the same performance
in a wide range of penalty parameters.
Therefore, in the following experiments, we fix $\beta = 1$ by default.

\subsection{Comparison on PCA Problems}

We first compare the performance of VRSGT 
with the existing algorithm DRSGD \cite{Chen2021decentralized} 
in solving the decentralized PCA problem
on the MNIST dataset \cite{LeCun1998gradient}.
The corresponding numerical experiments are performed on three different networks 
with $d = 16$ and $p = 5$,
including ring network, star network, 
and Erdos-Renyi network.
In the Erdos-Renyi network, 
two agents are connected with a fixed probability $\mathtt{prob} = 0.5$.
And all the networks are associated with the Metropolis constant matrix \cite{Shi2015}
as the mixing matrix $W$.
The batch size is set to $10$ for both VRSGT and DRSGD.
Figure \ref{fig:PCA_MNIST} depicts the decay of the stationarity gap defined in \eqref{eq:optgap}
versus the round of communications.
We can observe that VRSGT outperforms DRSGD in all the three networks.

\begin{figure}[ht!]
	\centering
	
	\subfigure[Ring network]{
		\label{subfig:PCA_kkt_beta}
		\includegraphics[width=0.3\linewidth]{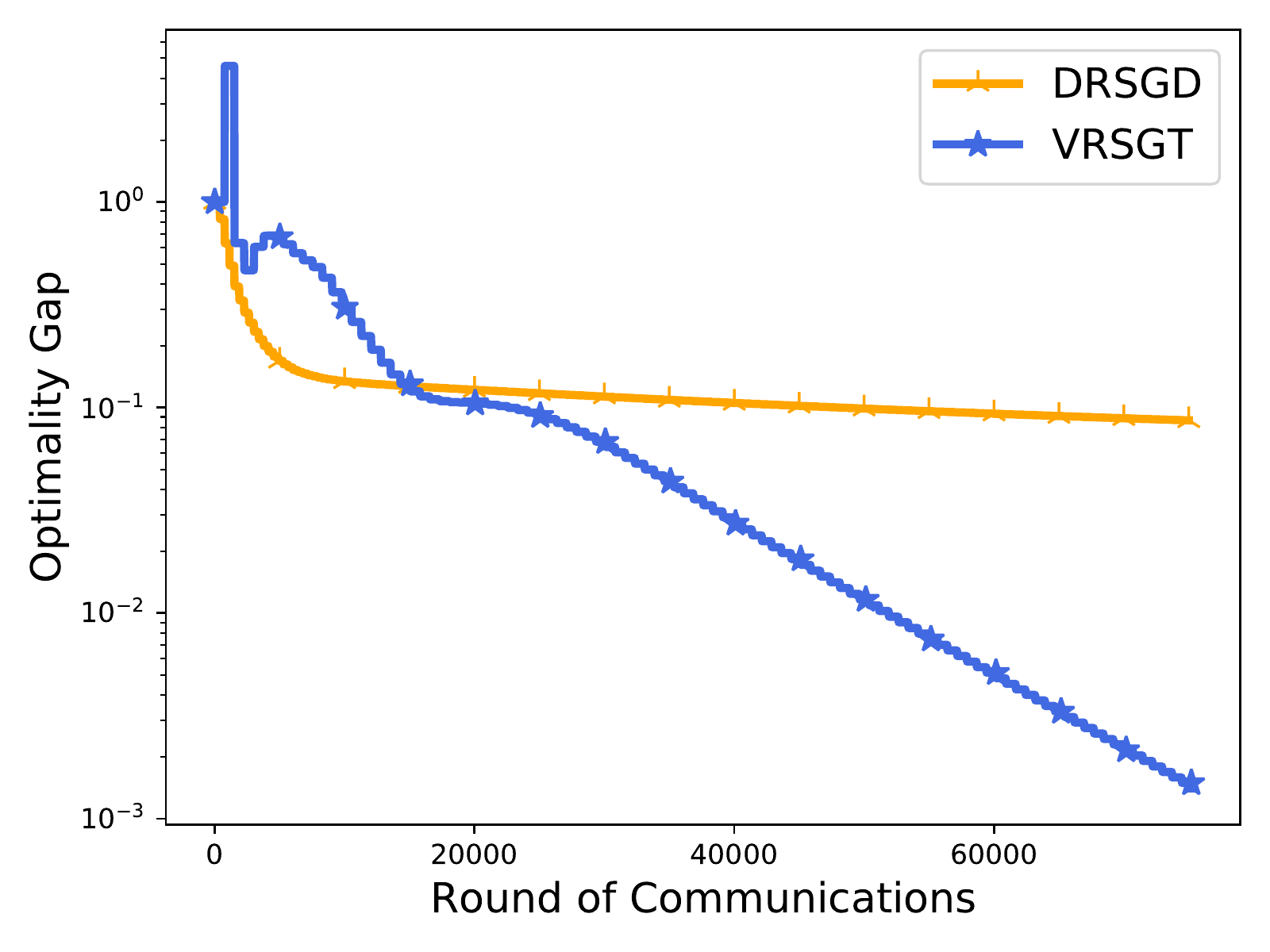}
	}
	\subfigure[Star network]{
		\label{subfig:PCA_cons_beta}
		\includegraphics[width=0.3\linewidth]{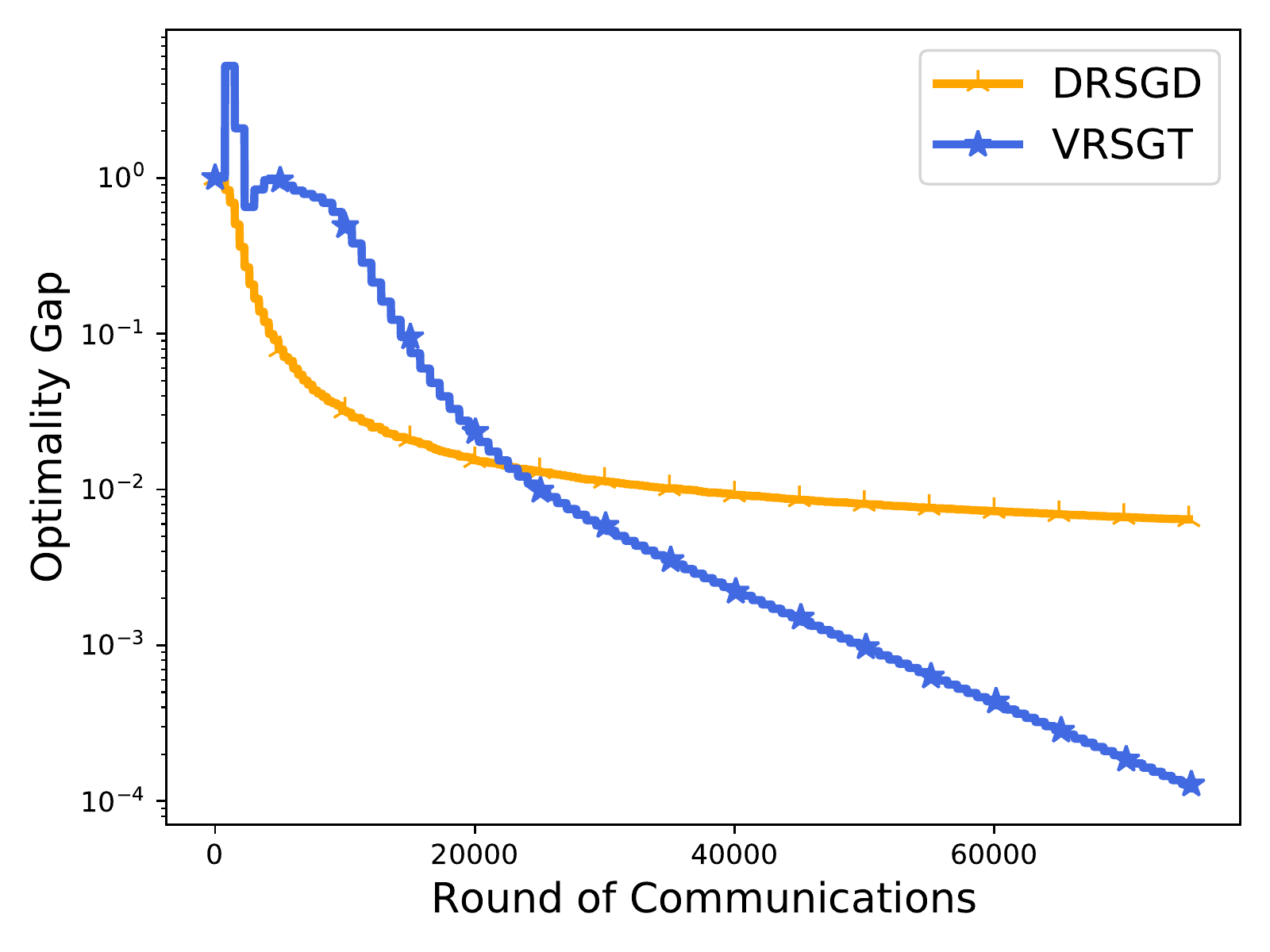}
	}
	\subfigure[Erdos-Renyi network]{
		\label{subfig:PCA_feas_beta}
		\includegraphics[width=0.3\linewidth]{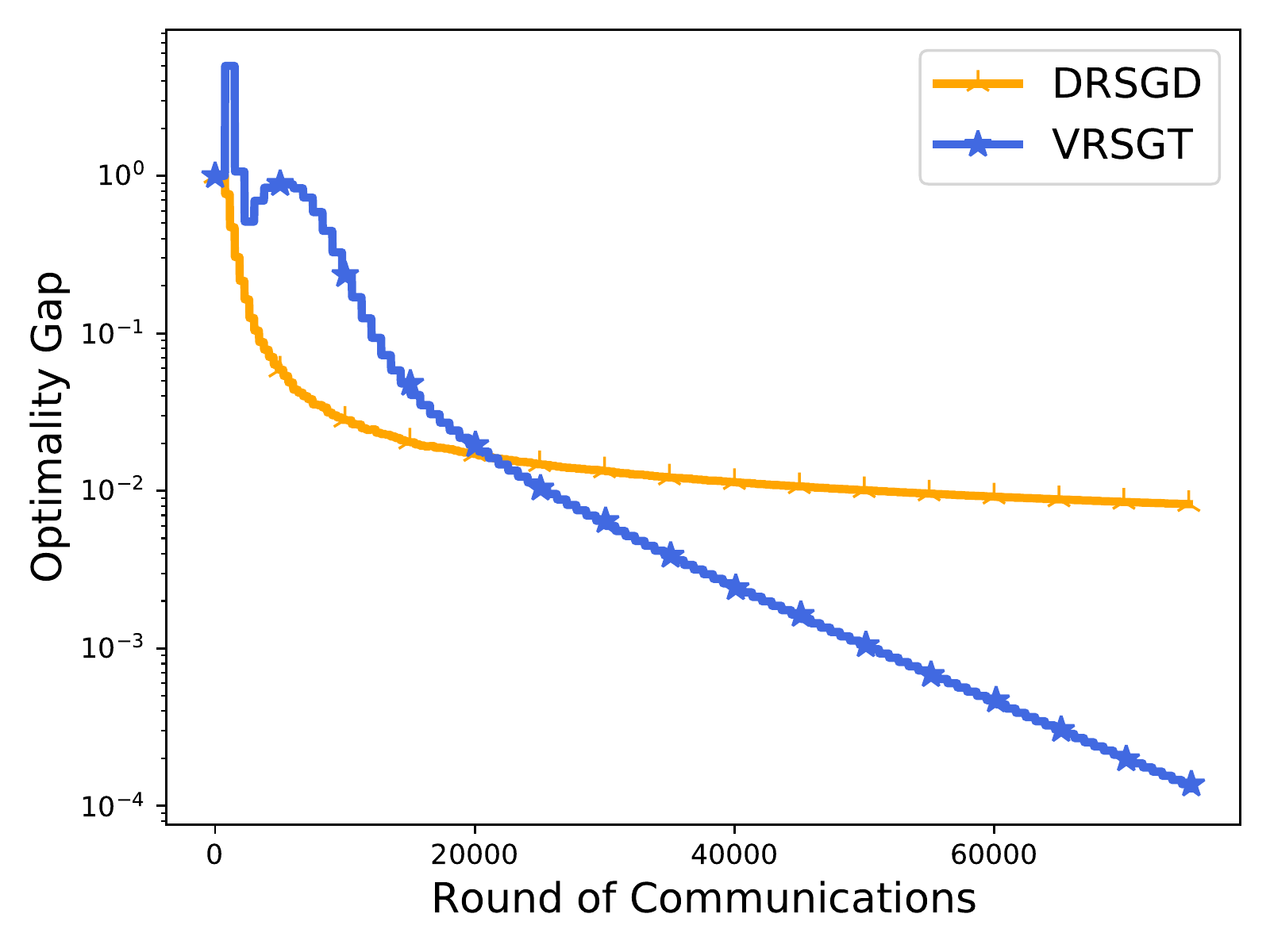}
	}
	
	\caption{Comparison between VRSGT and DRSGD in solving the decentralized PCA problem.}
	\label{fig:PCA_MNIST}
\end{figure}

\subsection{Comparison on DPCP Problems}

Our next experiment is to test VRSGT and DRSGD in solving the decentralized DPCP problem
on the KITTI dataset \cite{Geiger2013vision},
which plays an important role in autonomous driving applications.
DPCP is employed to distinguish the 3D points lying on the road plane (inliers)
and other objects off that plane (outliers).
Each point cloud in the KITTI dataset 
contains around $10^5$ points with approximately 50\% outliers.
And the data is homogenized and normalized to unit $\ell_2$-norm. 
Specifically, given a 3D point cloud of a road scene, 
DPCP reconstructs an affine plane $\{x \in \bR^3 \mid a\zz x - b = 0\}$ 
as a representation of the road. 
This task can be converted to a linear subspace learning problem 
by embedding the affine plane from $\bR^3$ into a hyperplane from $\bR^4$ 
with a normal vector $w = [a, -b]$ 
through the mapping $x \to [x\zz, 1]\zz$ \cite{Ding2019noisy}.

\begin{figure}[ht!]
	\centering
	
	\subfigure[Frame 1 of KITTI-CITY-5]{
		\includegraphics[width=0.45\linewidth]{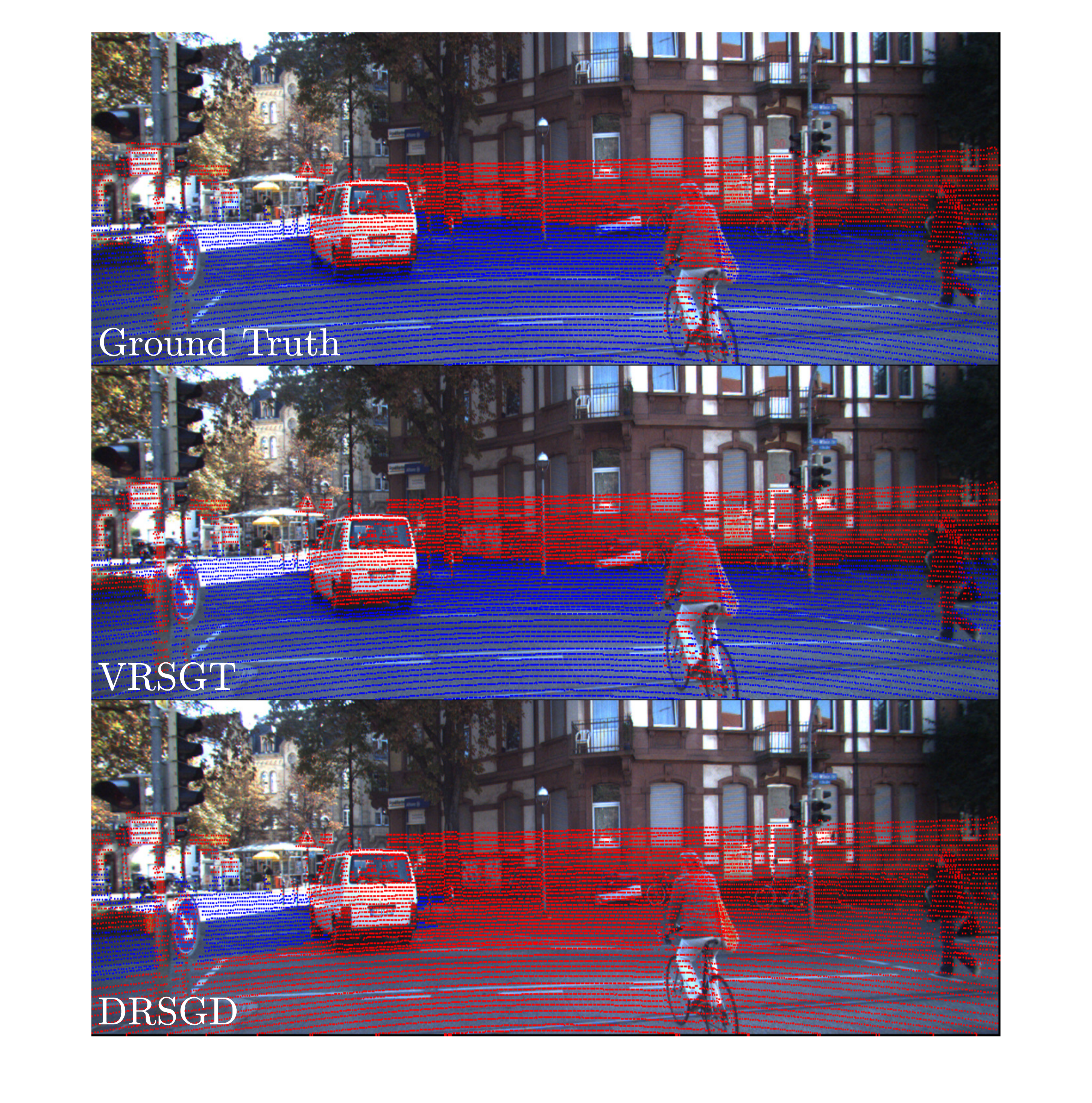}
	}
	\subfigure[Frame 45 of KITTI-CITY-5]{
		\includegraphics[width=0.45\linewidth]{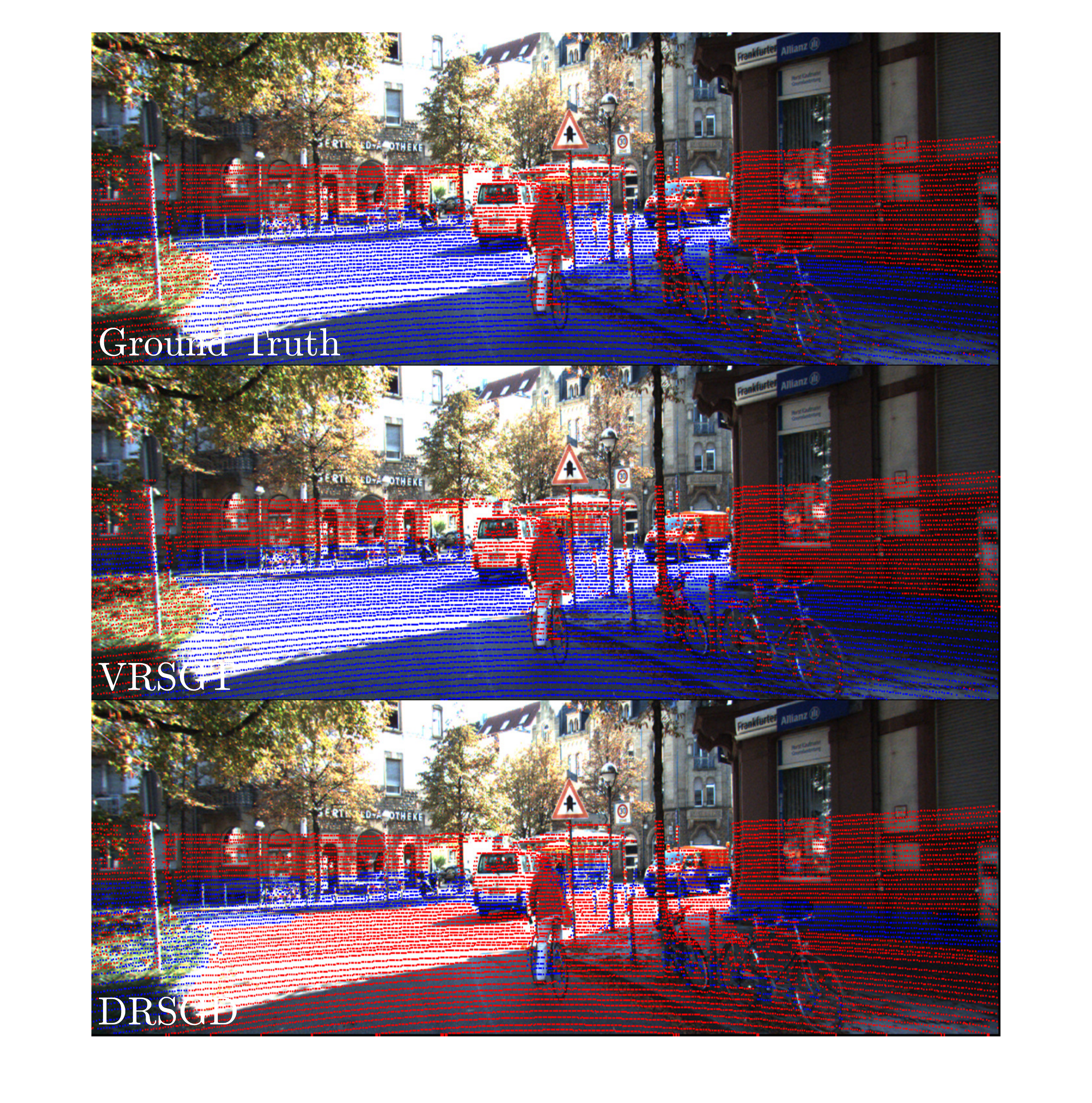}
	}
	
	\subfigure[Frame 0 of KITTI-CITY-48]{
		\includegraphics[width=0.45\linewidth]{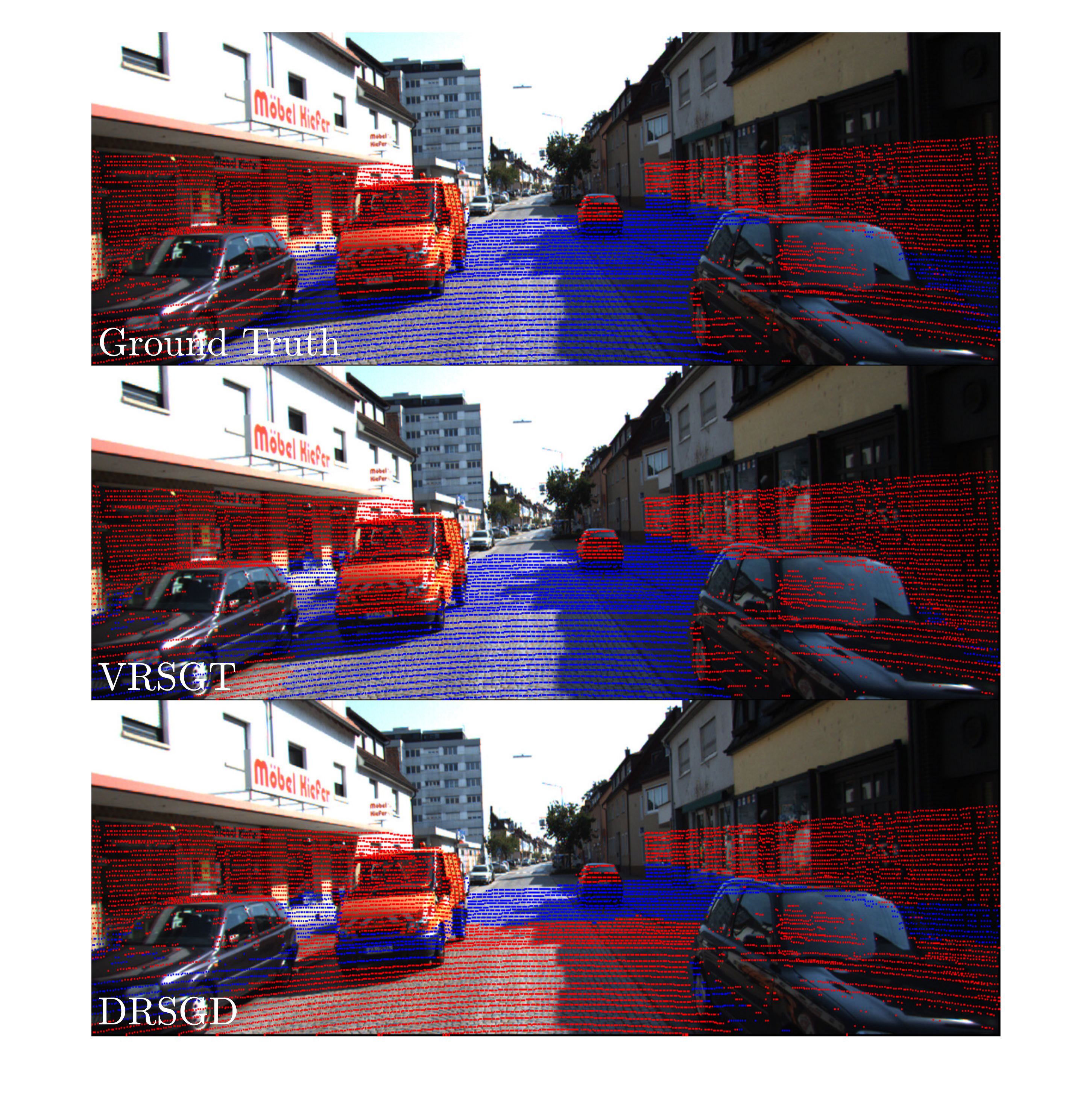}
	}
	\subfigure[Frame 328 of KITTI-CITY-71]{
		\includegraphics[width=0.45\linewidth]{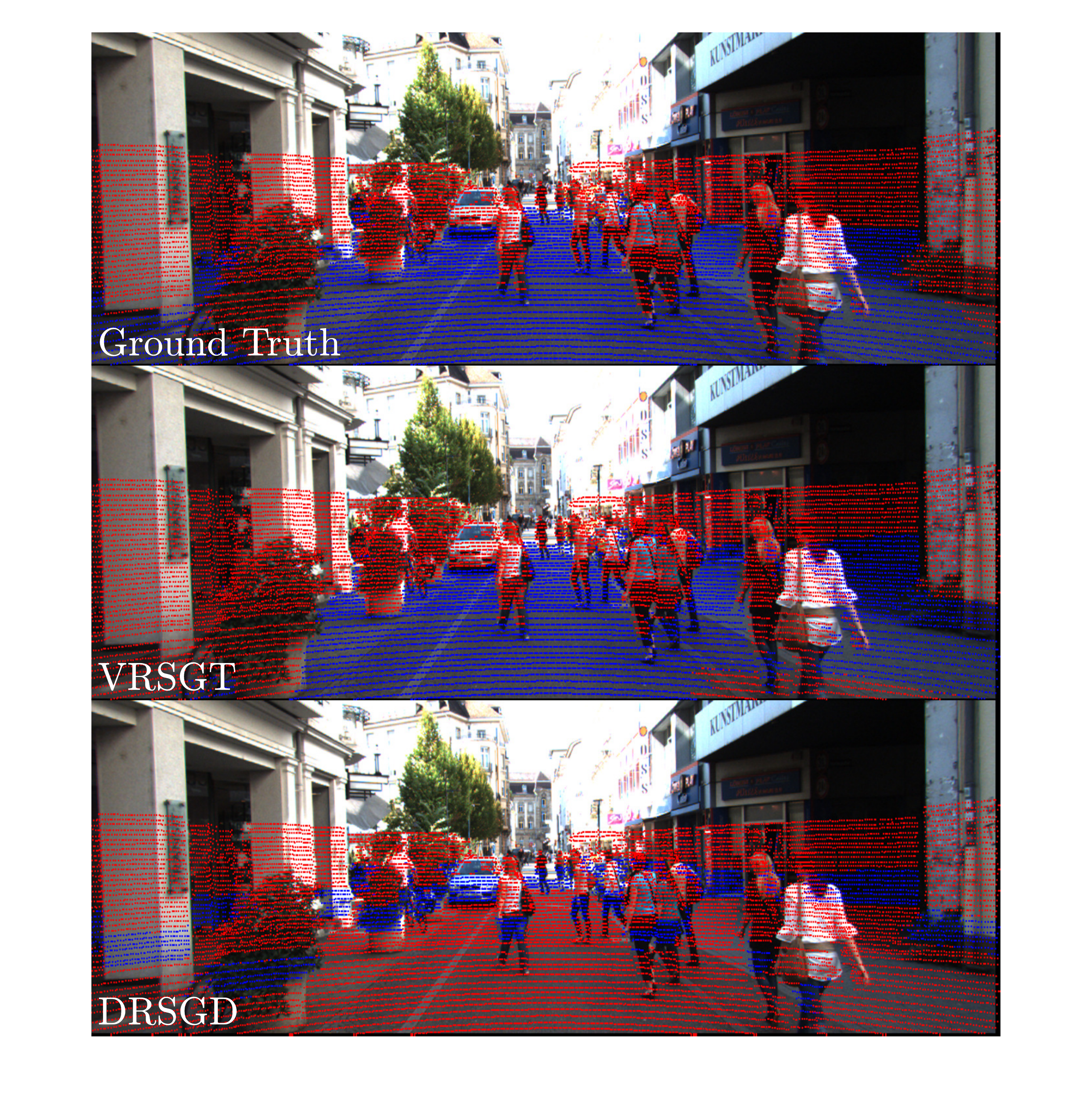}
	}
	
	\caption{Recovery results of four frames from the KITTI dataset 
		with inliers in blue and outliers in red. 
		Both inliers and outliers are detected by using a threshold on the distance 
		to the hyperplane recovered by each tested algorithm. 
		The results are represented by projecting 3D point clouds onto the image.}
	\label{fig:KITTI}
\end{figure}

In this test, VRSGT and DRSGD are set to run $100$ and $200$ epochs, respectively,
such that they perform the same rounds of communications.
And the batch size is set to $16$ for both VRSGT and DRSGD.
This experiment is conducted on an Erdos-Renyi network 
with $\mathtt{prob} = 0.5$ and $d = 16$.
The Metropolis constant matrix is also associated with the network
as the mixing matrix $W$.
We select four frames from the KITTI datset to test.
Table \ref{tb:KITTI} tabulates the corresponding recovery error
$\sqrt{1 - \langle \bar{w}, w^{\ast} \rangle^2}$ for each frame,
where $\bar{w}$ is the normal vector recovered by the tested algorithm
and $w^{\ast}$ denotes the ground truth.
Moreover, we present the recovery results of these four frames in Figure \ref{fig:KITTI}.
Both Table \ref{tb:KITTI} and Figure \ref{fig:KITTI} illustrate that
VRSGT produces better classification accuracy than DRSGD.

\begin{table}[ht!]
	\caption{Recovery errors of VRSGT and DRSGD on selected frames from the KITTI dataset.}
	\label{tb:KITTI}
	\centering
	\footnotesize
	\begin{tabular}{c|c|c|c|c} 
		\toprule 
		Dataset & \multicolumn{2}{c|}{KITTI-CITY-5} & KITTI-CITY-48 & KITTI-CITY-71 \\
		\midrule
		Frame & 1  & 45 & 0 & 328   \\
		\midrule
		VRSGT & 1.074e-02  & 7.997e-03 & 3.399e-02 & 4.088e-02 \\
		\midrule
		DRSGD & 1.063e-01  & 9.618e-02 & 1.646e-01 & 2.144e-01 \\
		\bottomrule 
	\end{tabular}
\end{table}

\section{Concluding Remarks}

\label{sec:concluding}

In this paper, we propose the algorithm VRSGT 
to minimize a finite sum of smooth functions with orthogonality constraints,
which are distributed over a decentralized network of multiple agents.
With appropriate algorithmic parameters, 
VRSGT achieves significantly improved sampling and communication complexities
compared with existing decentralized Riemannian methods \cite{Chen2021decentralized}.
In the future work, we will focus on how to accelerate the training process of
deep neural networks with orthogonality constraints
\cite{Arjovsky2016,Vorontsov2017,Huang2018}. 
Here, the main difficulty is to tackle the nonsmoothness and possible non-regularity of the objective function.


\bibliographystyle{siam}

\bibliography{library}

\addcontentsline{toc}{section}{References}



\begin{appendices}
	
%
%

\section{Proof of Lemma \ref{le:expen}.}

\label{apx:expen}

\begin{proof}[{\bf Proof of Lemma \ref{le:expen}}]
	Suppose $\sigma_{\min}$ is the smallest singular value of $X$.
	Then for any $X \in \cR$, we have $\norm{X}_2^2 \leq 7 / 6$ 
	and $\sigma_{\min}^2 \geq 5 / 6$, and hence,
	\begin{equation*}
		\norm{X (X\zz X - I_p)}\fs 
		\geq \sigma_{\min}^2 \norm{X\zz X - I_p}\fs
		\geq \dfrac{5}{6} \norm{X\zz X - I_p}\fs.
	\end{equation*}
	Moreover, simple algebraic manipulations give us
	\begin{equation*}
		\begin{aligned}
			& \jkh{G (X), X (X\zz X - I_p)} \\
			= {} & \jkh{\nabla f (Z) |_{Z = X} 
				\dkh{\dfrac{3}{2}I_p - \dfrac{1}{2} X\zz X}, 
				X (X\zz X - I_p)} \\
			& - \jkh{X \sym \dkh{X\zz \nabla f (Z) |_{Z = X}}, 
				X (X\zz X - I_p)} \\
			= {} & \jkh{\sym \dkh{X\zz \nabla f_i (Z) |_{Z = X}}, 
				(X\zz X - I_p) \dkh{\dfrac{3}{2}I_p - \dfrac{1}{2} X\zz X} 
				- X\zz X (X\zz X - I_p)} \\
			= {} & - \dfrac{3}{2} 
			\jkh{\sym \dkh{X\zz \nabla f (Z) |_{Z = X}}, 
				(X\zz X - I_p)^2},
		\end{aligned}
	\end{equation*}
	which yields that
	\begin{equation*}
		\begin{aligned}
			\abs{\jkh{G (X), X (X\zz X - I_p)}}
			& \leq \dfrac{3}{2} 
			\norm{\sym \dkh{X\zz \nabla f (Z) |_{Z = X}}}\ff 
			\norm{(X\zz X - I_p)^2}\ff \\
			& \leq \dfrac{7}{4} M \norm{X\zz X - I_p}\fs.
		\end{aligned}
	\end{equation*}
	Now it can be readily verifies that
	\begin{equation*}
		\begin{aligned}
			\norm{H (X)}\fs 
			& = \norm{G (X)}\fs 
			+ 2 \beta \jkh{G (X), X (X\zz X - I_p)}
			+ \beta^2 \norm{X (X\zz X - I_p)}\fs \\
			& \geq \norm{G (X)}\fs 
			+ \dfrac{1}{6} \beta \dkh{5 \beta - 21 M}\norm{X\zz X - I_p}\fs,
		\end{aligned}
	\end{equation*}
	which together with $\beta \geq (6 + 21 M) / 5$ completes the proof.
\end{proof}

\section{Convergence Analysis}

\label{apx:convergence}

Existing convergence guarantees of decentralized SVRG algorithms, 
such as \cite{Sun2020improving,Xin2022fast}, 
are constructed for globally Lipschitz smooth functions,
which are restrictive for \eqref{eq:opt-dest} since $\Snp$ is compact.
In this section, the global convergence of Algorithm~\ref{alg:VRSGT} 
is rigorously established under rather mild conditions.
The objective function is only assumed to be locally Lipschitz smooth 
(see Assumption \ref{asp:smooth}).

To begin with, according to the Perron-Frobenius Theorem \cite{Pillai2005perron},
the second largest eigenvalue in magnitude of the mixing matrix $W$, 
denoted by $\lambda$, is strictly less than 1.
Recall that $\cR$ is a bounded set defined in Lemma \ref{le:expen}.
And we define $\cB := \{X \in \Rnp \mid \norm{X}\ff \leq \sqrt{7dp / 6} + \sqrt{d}\}$.

All the special constants to be used in this section are listed below.
We divide these constants into four categories.

\begin{itemize}
	
	\item Category I:
	\begin{equation} \label{eq:constants-1}
		\begin{aligned}
			& M_G = \sup\, \{ \norm{G^{[j]}_i (X)}\ff \mid X \in \cB, \iid, j \in [l] \}; \quad
			M_E = \sup\, \{ \norm{bE(X)}\ff \mid X \in \cB\}; \\
			& L_G = \sup\, \hkh{ 
				\dfrac{\norm{G^{[j]}_i (X) - G^{[j]}_i (Y)}\ff}{\norm{X - Y}\ff}
				\bigg| X \neq Y, X \in \cB, Y \in \cB, \iid, j \in [l]}; \\
			& L_E = \sup\, \hkh{ 
				\dfrac{\norm{E(X) - E (Y)}\ff}{\norm{X - Y}\ff}
				\bigg| X \neq Y, X \in \cB, Y \in \cB}; \\
			& M_D = \norm{\avD^{(0, 0)} - \bfD^{(0, 0)}}\ff; \quad
			\gamma = \dfrac{1 - \lambda^2}{2 \lambda^2}; \quad
			\underline{f} = \inf \{f (X) \mid X \in \cB\}.			
		\end{aligned}
	\end{equation}
	
	\item Category II:
	\begin{equation} \label{eq:constants-2}
		\begin{aligned}
			& \hat{L} = L_G + \beta L_E; \quad
			C_D = M_D + \sqrt{d} \dkh{(2q + 1) M_G + \beta M_E}; \\
			& C_1 = \dfrac{\eta}{2} - \dfrac{\eta^2 \hat{L}}{2} - 4 \eta^3 \hat{L}^2 - 24 \dkh{1 + \dfrac{1}{\gamma}} \eta^3 \hat{L}^2; \\
			& C_2 = \dfrac{1 - \lambda^2}{2} - 48 \dkh{1 + \dfrac{1}{\gamma}} \eta \hat{L}^2 - 9 \eta \hat{L}^2, \\
			& C_3 = \dfrac{\eta (1 - \lambda^2)}{2} - \dkh{1 + \dfrac{1}{\gamma}}\eta^2 - 24 \dkh{1 + \dfrac{1}{\gamma}} \eta^3 \hat{L}^2 - 4 \eta^3 \hat{L}^2; \\
			& C_4 = \dkh{f (X_{\mathrm{init}}) - \underline{f}}
			\dkh{\dfrac{8 \eta^2 \hat{L}^2 + 2}{C_1}
			+ \dfrac{16 \hat{L}^2 + 1}{d C_2}
			+ \dfrac{8 \eta^2 \hat{L}^2}{d C_3}}; \\
			& C = 2 (\hat{L}^2 + 1) C_4.
		\end{aligned}
	\end{equation}

	\item Category III:
	\begin{equation} \label{eq:constants-3}
		\begin{aligned}
			& \bar{\eta}_1 = \dfrac{216}{49 \beta^2}; \quad 
			\bar{\eta}_2 = \dfrac{12}{7 \beta}; \quad
			\bar{\eta}_3 = \dfrac{1}{18(4 + 3(2q + 1) M_G)}; \\
			& \bar{\eta}_4 = \dfrac{5 \beta / 72 - 4 - 3 (2q + 1) M_G}{(1 + (2q + 1) M_G)^2}; \quad
			\bar{\eta}_5 
			= \dfrac{\sqrt{d} (1 - \lambda)}{\beta^2 L_E \lambda C_D}; \quad
			\bar{\eta}_6 = \dfrac{\sqrt{d}}{\beta^2 L_E C_D}; \\
			& \bar{\eta}_7 = \dfrac{- \hat{L} / 2 + 
			\sqrt{\hat{L}^2 / 4 + 48 (1 + 1 / \gamma) \hat{L}^2 + 8 \hat{L}^2}}{48 (1 + 1 / \gamma) \hat{L}^2 + 8 \hat{L}^2}; \quad
			\bar{\eta}_8 = \dfrac{1 - \lambda^2}{96 (1 + 1 / \gamma) \hat{L}^2 + 18 \hat{L}^2}; \\
			& \bar{\eta}_9 = \dfrac{- (1 + 1 / \gamma) + \sqrt{(1 + 1 / \gamma)^2 + 8 (1 - \lambda^2) (6 (1 + 1 / \gamma) + \hat{L}^2)}}{48 (1 + 1 / \gamma) \hat{L}^2 + 8 \hat{L}^2}; \\
			& \bar{\eta} = \min \{\bar{\eta}_1, \bar{\eta}_2, \bar{\eta}_3, \bar{\eta}_4, \bar{\eta}_5, \bar{\eta}_6, \bar{\eta}_7, \bar{\eta}_8, \bar{\eta}_9\}.
		\end{aligned}
	\end{equation}

	\item Category IV:
	\begin{equation} \label{eq:constants-4}
		\begin{aligned}
			& \underline{\beta}_1 = \dfrac{6 + 21 M}{5}; \quad
			\underline{\beta}_2 = \dfrac{72 (4 + 3 (2q + 1) M_g)}{5}; \\
			& \underline{\beta}_3 = \sqrt{\dfrac{1}{L_E}}; \quad
			\underline{\beta}_4 = \dfrac{L_G}{L_E}; \quad
			\underline{\beta}_5 = \dfrac{6 \sqrt{d}}{(1 - \lambda) M_D}; \\
			& \underline{\beta} = \max \{1, \underline{\beta}_1, \underline{\beta}_2, \underline{\beta}_3, \underline{\beta}_4, \underline{\beta}_5\}.
		\end{aligned}
	\end{equation}
	
\end{itemize}

Category I consists of the constants defined in \eqref{eq:constants-1}
that is independent of $\beta$,
while the constants in Category II \eqref{eq:constants-2} depend on $\beta$.
Categories III and IV are composed of the constants related to the stepsize $\eta$ and penalty parameter $\beta$, respectively.  

In the following, we define an auxiliary sequence $\{T^{(k, t)}_i\}$ for VRSGT.
At the $k$-th outer iteration, $T^{(k + 1, 1)}_i$ is first updated by
\begin{equation*}
	T^{(k + 1, 1)}_i = G_{i} (X^{(k + 1, 1)}_i).
\end{equation*}
Then at the $t$-th inner iteration, $T^{(k + 1, t + 1)}_i$ is updated by
\begin{equation*}
	T^{(k + 1, t + 1)}_i = \dfrac{1}{\tau} \sum_{j \in \cC^{(k + 1, t)}} 
	\dkh{G^{[j]}_{i} (X^{(k + 1, t + 1)}_i) - G^{[j]}_{i} (X^{(k + 1, t)}_i)} 
	+ T^{(k + 1, t)}_i.
\end{equation*}
Now we prove that $S^{(k, t)}_i = T^{(k, t)}_i + \beta E (X^{(k, t)}_i)$ 
in the following lemma.

\begin{lemma} \label{le:T}
	Suppose $\{\bfX^{(k, t)}\}$ is the iterate sequence generated by Algorithm \ref{alg:VRSGT}.
	Then for any $k \in \bN$ and $t \in [q + 1]$,
	it holds that $S^{(k, t)}_i = T^{(k, t)}_i + \beta E (X^{(k, t)}_i)$.
\end{lemma}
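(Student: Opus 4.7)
The plan is to prove the identity by nested induction: an outer induction on $k$ and, inside each outer iteration, an inner induction on $t \in \{1, 2, \dotsc, q+1\}$. The crucial algebraic observation that drives everything is the clean additive splitting
\[
H^{[j]}_i(X) = G^{[j]}_i(X) + \beta E(X), \qquad H_i(X) = G_i(X) + \beta E(X),
\]
which is immediate from the formula for $H^{[j]}_i(X_i)$ given just after Lemma~\ref{le:expen} and the definition of $G_i$ in~\eqref{eq:Gi}. Notice that the $\beta E(X)$ term depends only on the current iterate and is not subject to any variance-reduction correction, so it should pass cleanly through each recursion.

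For the inner base case $t = 1$, Algorithm~\ref{alg:VRSGT} sets $S^{(k+1,1)}_i = H_i(X^{(k+1,1)}_i)$ while the auxiliary sequence is defined by $T^{(k+1,1)}_i = G_i(X^{(k+1,1)}_i)$. The splitting above gives the identity at $t=1$ immediately, with no dependence on iterates from the previous outer cycle. This is the main reason the outer induction is trivial: each outer iteration re-initializes $S^{(k+1,1)}$ from scratch via a full $H$-evaluation, so the claim for $(k+1, t)$ does not need to be chained through the reset $S^{(k+1,0)} := S^{(k+1,q+1)}$.

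For the inner inductive step, assume $S^{(k+1,t)}_i = T^{(k+1,t)}_i + \beta E(X^{(k+1,t)}_i)$. Applying the same splitting inside the SVRG-style update,
\[
\frac{1}{\tau}\!\!\sum_{j\in\cC^{(k+1,t)}} \!\!\bigl(H^{[j]}_i(X^{(k+1,t+1)}_i) - H^{[j]}_i(X^{(k+1,t)}_i)\bigr)
= \frac{1}{\tau}\!\!\sum_{j\in\cC^{(k+1,t)}} \!\!\bigl(G^{[j]}_i(X^{(k+1,t+1)}_i) - G^{[j]}_i(X^{(k+1,t)}_i)\bigr) + \beta\bigl(E(X^{(k+1,t+1)}_i) - E(X^{(k+1,t)}_i)\bigr),
\]
since the $\beta E$ contributions are deterministic in $X$ and telescope out of the sample average. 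Substituting the induction hypothesis and the $T$-recursion then collapses everything to $S^{(k+1,t+1)}_i = T^{(k+1,t+1)}_i + \beta E(X^{(k+1,t+1)}_i)$, closing the induction.

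There is no real obstacle here; the argument is essentially a bookkeeping verification that the variance-reduction correction acts only on the $G$-part while the penalty part $\beta E$ is tracked exactly at the current iterate. The only point requiring a touch of care is making sure that the reset step at the end of each outer iteration is benign, which it is precisely because the next outer iteration re-seeds $\bfS^{(k+1,1)}$ through a fresh full $\bfH$-evaluation rather than via the recursive SVRG correction.
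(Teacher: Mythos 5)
Your proof is correct and follows essentially the same route as the paper's: both rest on the additive splitting $H^{[j]}_i(X) = G^{[j]}_i(X) + \beta E(X)$, verify the base case $t=1$ directly from $S^{(k,1)}_i = H_i(X^{(k,1)}_i)$, and close the induction on $t$ by observing that the $\beta E$ terms pass through the sample average and telescope. Your extra remark about why the outer reset is benign is a point the paper leaves implicit, but the argument is the same.
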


\begin{proof}
	We use mathematical induction on $t$ to prove this lemma.
	At first, for $t = 1$, we have
	\begin{equation*}
		S^{(k, 1)}_i = H_{i} (X^{(k, 1)}_i) 
		= G_{i} (X^{(k, 1)}_i) + \beta E (X^{(k, 1)}_i)
		= T^{(k, 1)}_i + \beta E (X^{(k, 1)}_i).
	\end{equation*}
	Now, we assume that the assertion of this lemma holds for $S^{(k, t)}_i$ and $T^{(k, t)}_i$,
	and investigate the situation at for $S^{(k, t + 1)}_i$ and $T^{(k, t + 1)}_i$.
	In fact, it can be readily verified  that
	\begin{equation*}
		\begin{aligned}
			S^{(k, t + 1)}_i 
			= {} & \dfrac{1}{\tau} \sum_{j \in \cC^{(k, t)}} 
			\dkh{H^{[j]}_{i} (X^{(k, t + 1)}_i) - H^{[j]}_{i} (X^{(k , t)}_i)} 
			+ S^{(k, t)}_i \\
			= {} & \dfrac{1}{\tau} \sum_{j \in \cC^{(k, t)}} 
			\dkh{ G^{[j]}_{i} (X^{(k, t + 1)}_i) + \beta E (X^{(k, t + 1)}_i) 
				- G^{[j]}_{i} (X^{(k , t)}_i) - \beta E (X^{(k, t)}_i) } \\
			& + T^{(k, t)}_i + \beta E (X^{(k, t)}_i) \\
			= {} &  \dfrac{1}{\tau} \sum_{j \in \cC^{(k, t)}} 
			\dkh{ G^{[j]}_{i} (X^{(k, t + 1)}_i) - G^{[j]}_{i} (X^{(k , t)}_i) }
			+ T^{(k, t)}_i + \beta E (X^{(k, t + 1)}_i) \\
			= {} & T^{(k, t + 1)}_i + \beta E (X^{(k, t + 1)}_i).
		\end{aligned}
	\end{equation*}
	The proof is completed.
\end{proof}

Next, we prove that $T^{(k, t)}_i$ is bounded if $X^{(k, s)}_i \in \cB$ for $s \in [t]$.

\begin{lemma} \label{le:bound-T}
	Suppose $X^{(k, s)}_i \in \cB$ for $s \in [t]$.
	Then it holds that $\|T^{(k, t)}_i\|\ff \leq (2q + 1) M_g$.
\end{lemma}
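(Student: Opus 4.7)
The plan is to unroll the recursion defining $T^{(k,t)}_i$ and then apply the triangle inequality together with the uniform bound $\norm{G^{[j]}_i(X)}\ff \leq M_G$ for $X \in \cB$ from \eqref{eq:constants-1}. Since $T^{(k,1)}_i = G_i(X^{(k,1)}_i) = (1/l)\sum_{j=1}^l G^{[j]}_i(X^{(k,1)}_i)$ and $X^{(k,1)}_i \in \cB$ by hypothesis, the base case $t = 1$ follows immediately from the triangle inequality: $\norm{T^{(k,1)}_i}\ff \leq M_G \leq (2q+1)M_G$.

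For $t \geq 2$, iterating the defining recursion of $T^{(k,t)}_i$ gives the telescoping expression
\begin{equation*}
T^{(k,t)}_i = G_i(X^{(k,1)}_i) + \sum_{s=1}^{t-1} \frac{1}{\tau}\sum_{j \in \cC^{(k,s)}} \left(G^{[j]}_i(X^{(k,s+1)}_i) - G^{[j]}_i(X^{(k,s)}_i)\right).
\end{equation*}
Since all iterates $X^{(k,s)}_i$ for $s \in [t]$ lie in $\cB$ by assumption, each increment inside the sum satisfies $\norm{G^{[j]}_i(X^{(k,s+1)}_i) - G^{[j]}_i(X^{(k,s)}_i)}\ff \leq 2M_G$, and the mini-batch average over $\cC^{(k,s)}$ preserves this per-sample bound. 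Thus each of the $t-1 \leq q$ outer summands contributes at most $2M_G$ in Frobenius norm, and combining these with the bound on $G_i(X^{(k,1)}_i)$ yields
\begin{equation*}
\norm{T^{(k,t)}_i}\ff \;\leq\; M_G + (t-1)\cdot 2M_G \;\leq\; M_G + 2qM_G \;=\; (2q+1)M_G.
\end{equation*}

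This is essentially a bookkeeping argument with no real obstacle; the only subtleties are to keep track of which range of $s$ appears after unrolling and to make sure the hypothesis $X^{(k,s)}_i \in \cB$ covers every iterate appearing inside the telescoped sum. Note that using the crude increment bound $2M_G$ (rather than a Lipschitz-type bound depending on $\norm{X^{(k,s+1)}_i - X^{(k,s)}_i}\ff$) is exactly what produces the $(2q+1)M_G$ form and keeps this lemma independent of the stepsize $\eta$, which will be important when this bound is subsequently invoked to verify the invariance $X^{(k,t)}_i \in \cB$ elsewhere in the convergence analysis.
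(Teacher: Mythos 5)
Your proof is correct and follows essentially the same route as the paper: unroll the recursion for $T^{(k,t)}_i$ into a telescoping sum anchored at $T^{(k,1)}_i = G_i(X^{(k,1)}_i)$, apply the triangle inequality, and bound each mini-batch-averaged increment by $2M_G$ using the uniform bound on $\cB$, yielding $M_G + 2(t-1)M_G \leq (2q+1)M_G$. The only difference is a cosmetic index shift in how the telescoped sum is written.
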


\begin{proof}
	By straightforward manipulations, we have
	\begin{equation*}
		\begin{aligned}
			T^{(k, t)}_i 
			& = \sum_{s = 2}^t \dkh{ T^{(k, s)}_i - T^{(k, s - 1)}_i } + T^{(k, 1)}_i \\
			& = \dfrac{1}{\tau} \sum_{s = 2}^t \sum_{j \in \cC^{(k, s)}} 
			\dkh{G^{[j]}_{i} (X^{(k, s)}_i) - G^{[j]}_{i} (X^{(k, s - 1)}_i)} 
			+ G_{i} (X^{(k, 1)}_i),
		\end{aligned}
	\end{equation*}
	which is followed by
	\begin{equation*}
		\begin{aligned}
			\norm{ T^{(k, t)}_i }\ff
			& \leq \dfrac{1}{\tau} \sum_{s = 2}^t \sum_{j \in \cC^{(k, s)}} 
			\norm{G^{[j]}_{i} (X^{(k, s)}_i) - G^{[j]}_{i} (X^{(k, s - 1)}_i)} \ff
			+ \norm{G_{i} (X^{(k, 1)}_i)}\ff \\
			& \leq 2(t - 1) M_g + M_g
			\leq (2q + 1) M_g .
		\end{aligned}
	\end{equation*}
	We complete the proof.
\end{proof}

For the sake of convenience, we now define the following averages of local variables.
\begin{itemize}
	
	\item $\barX^{(k, t)} = \dfrac{1}{d} \sumiid X^{(k, t)}_i \in \Rnp$,
	$\avX^{(k, t)} = \dkh{\bfone_d \otimes I_n} \barX^{(k, t)} \in \bR^{dn \times p}$.

	\item $\barD^{(k, t)} =  \dfrac{1}{d} \sumiid D^{(k, t)}_i \in \Rnp$,
	$\avD^{(k, t)} = \dkh{\bfone_d \otimes I_n} \barD^{(k, t)} \in \bR^{dn \times p}$.
		
	\item $\barS^{(k, t)} =  \dfrac{1}{d} \sumiid S^{(k, t)}_i \in \Rnp$,
	$\avS^{(k, t)} = \dkh{\bfone_d \otimes I_n} \barS^{(k, t)} \in \bR^{dn \times p}$.
	
	\item $\barT^{(k, t)} =  \dfrac{1}{d} \sumiid T^{(k, t)}_i \in \Rnp$,
	$\avT^{(k, t)} = \dkh{\bfone_d \otimes I_n} \barT^{(k, t)} \in \bR^{dn \times p}$.
		
	\item $\barE^{(k, t)} =  \dfrac{1}{d} \sumiid E (X^{(k, t)}_i) \in \Rnp$,
	$\avE^{(k, t)} = \dkh{\bfone_d \otimes I_n} \barE^{(k, t)} \in \bR^{dn \times p}$.
		
\end{itemize}
Then it is not difficult to check that 
the relationship $\barD^{(k, t)} = \barS^{(k, t)} = \barT^{(k, t)} + \beta \barE^{(k, t)}$ 
holds for any $k \in \bN$ and $t \in [q + 1]$.
Moreover, we denote $J = \bfone_d \bfone_d\zz / d \in \bR^{d \times d}$,
and $\bfJ = J \otimes I_n \in \bR^{dn \times dn}$.
It holds that $\dkh{\bfW - \bfJ} \avX^{(k, t)} = \dkh{\bfW - \bfJ} \avD^{(k, t)} = 0$.

Based on Lemmas \ref{le:T} and \ref{le:bound-T}, 
we can prove the following technical lemma.

\begin{lemma} \label{le:bound-X-inner}
	Let the conditions in Assumptions \ref{asp:network} and \ref{asp:smooth} hold
	and the stepsize $\eta$ and penalty parameter $\beta$ of Algorithm \ref{alg:VRSGT} satisfy
	\begin{equation*}
		0 < \eta < \min\hkh{
			\bar{\eta}_1, \bar{\eta}_2, \bar{\eta}_3, \bar{\eta}_4
		},
		\mbox{~and~}
		\beta > \max \hkh{1, \underline{\beta}_2},
	\end{equation*}
	respectively.
	Suppose $\barX^{(k, t)} \in \cR$, 
	$\|T^{(k, t)}_i\|\ff \leq (2q + 1) M_G$,
	$\norm{\bfX^{(k, t)}}\ff \leq \sqrt{7dp / 6} + \sqrt{d}$,
	and $ \norm{\bar{\bfX}^{(k, t)} - \bfX^{(k, t)}}\ff \leq \sqrt{d} / (\beta^2 L_E)$.
	Then we have $\barX^{(k, t + 1)} \in \cR$.
\end{lemma}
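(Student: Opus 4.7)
The plan is to reduce the vector-valued update to a scalar-valued recursion on the averaged iterate, then extract a one-step contraction on the orthogonality violation $V := Y\zz Y - I_p$ for $Y := \barX^{(k,t)}$. The first step exploits that $\bfW = W \otimes I_n$ with $W$ doubly stochastic: averaging both sides of $\bfX^{(k,t+1)} = \bfW(\bfX^{(k,t)} - \eta\bfD^{(k,t)})$ over the $d$ agents eliminates the mixing and yields $\barX^{(k,t+1)} = \barX^{(k,t)} - \eta\barD^{(k,t)}$. A parallel averaging argument applied to the $\bfS$- and $\bfD$-updates, together with the initialization $\bfD^{(0,0)} = \bfS^{(0,0)}$, gives $\barD^{(k,t)} = \barS^{(k,t)}$, which combined with Lemma~\ref{le:T} becomes $\barD^{(k,t)} = \barT^{(k,t)} + \beta\barE^{(k,t)}$.

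\medskip
Next I would expand
\[
V_{new} := \barX^{(k,t+1)\zz}\barX^{(k,t+1)} - I_p = V - \eta(Y\zz\barD^{(k,t)} + \barD^{(k,t)\zz}Y) + \eta^2\barD^{(k,t)\zz}\barD^{(k,t)}.
\]
The key algebraic identity $Y\zz E(Y) + E(Y)\zz Y = 2V + 2V^2$, which follows immediately from $Y\zz Y = I_p + V$, decomposes the cross term into a contraction piece $-2\eta\beta(V + V^2)$ plus error contributions. Splitting $\barE^{(k,t)} = E(Y) + (\barE^{(k,t)} - E(Y))$ then produces
\[
V_{new} = (1-2\eta\beta)V - 2\eta\beta V^2 + R,
\]
where $R$ collects $-2\eta\sym(Y\zz\barT^{(k,t)})$, $-2\eta\beta\sym(Y\zz(\barE^{(k,t)} - E(Y)))$, and $\eta^2\barD^{(k,t)\zz}\barD^{(k,t)}$. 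Using $Y \in \cR$ to obtain $\|V\|\ff, \|V\|_2 \leq 1/6$, and noting that the combination $\eta \leq \bar{\eta}_1 = 216/(49\beta^2)$ with $\beta > \underline{\beta}_2$ forces $\eta\beta$ to sit well below $1/2$, a spectral bound on the symmetric operator $V \mapsto (1-2\eta\beta)V - 2\eta\beta V^2$ gives $\|(1-2\eta\beta)V - 2\eta\beta V^2\|\ff \leq 1/6 - 5\eta\beta/18$.

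\medskip
The three pieces of $R$ are then controlled using the standing hypotheses. First, $\|Y\zz\barT^{(k,t)}\|\ff \leq \sqrt{7/6}(2q+1)M_G$, using $\|Y\|_2 \leq \sqrt{7/6}$ and $\|T^{(k,t)}_i\|\ff \leq (2q+1)M_G$. Second, since $\|\bfX^{(k,t)}\|\ff \leq \sqrt{7dp/6} + \sqrt{d}$ implies $X^{(k,t)}_i, Y \in \cB$, the Lipschitz continuity of $E$ together with the consensus bound gives $\|\barE^{(k,t)} - E(Y)\|\ff \leq L_E\|\avX^{(k,t)} - \bfX^{(k,t)}\|\ff/\sqrt{d} \leq 1/\beta^2$, so the associated term is of order $\eta/\beta$. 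Third, $\|\barD^{(k,t)}\|\ff \leq (2q+1)M_G + \beta M_E$ yields $\|\eta^2\barD^{(k,t)\zz}\barD^{(k,t)}\|\ff \leq \eta^2((2q+1)M_G + \beta M_E)^2$. The stepsize bounds $\bar{\eta}_1, \bar{\eta}_2, \bar{\eta}_3, \bar{\eta}_4$ and the penalty lower bound $\underline{\beta}_2$ are calibrated precisely so that the assembled $\|R\|\ff$ stays below $5\eta\beta/18$, whence $\|V_{new}\|\ff \leq 1/6$, i.e., $\barX^{(k,t+1)} \in \cR$.

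\medskip
The main obstacle is the delicate coupling between $\eta$ and $\beta$: the contractive gain is of order $\eta\beta$, but the quadratic tail $\eta^2\barD^{(k,t)\zz}\barD^{(k,t)}$ carries a factor $\beta^2 M_E^2$, which would destroy contraction unless $\eta\beta^2$ is kept bounded; this is the role of $\bar{\eta}_1 \sim 1/\beta^2$. Simultaneously, the tracking error from $\barT^{(k,t)}$ forces $\beta$ itself to dominate $4 + 3(2q+1)M_G$, which is exactly the role of $\underline{\beta}_2$. Only the conjunction of these two constraints (aided by $\bar{\eta}_2, \bar{\eta}_3, \bar{\eta}_4$ and $\beta > 1$ for the remaining lower-order terms) permits closing the inequality $\|R\|\ff \leq 5\eta\beta/18$; everything else is routine arithmetic.
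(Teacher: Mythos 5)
Your overall route is the paper's: average the update to obtain $\barX^{(k,t+1)} = \barX^{(k,t)} - \eta\barD^{(k,t)}$ with $\barD^{(k,t)} = \barT^{(k,t)} + \beta\barE^{(k,t)}$ (via Lemma \ref{le:T}), expand the Gram matrix, extract an $O(\eta\beta)$ contraction on $V = (\barX^{(k,t)})\zz\barX^{(k,t)} - I_p$ from the penalty part, and absorb the residual using $\underline{\beta}_2$ and the $\bar{\eta}_i$. Your one structural deviation --- the single uniform spectral bound $\norm{(1-2\eta\beta)V - 2\eta\beta V^2}\ff \leq (1-\tfrac{5}{3}\eta\beta)\norm{V}\ff \leq \tfrac{1}{6} - \tfrac{5\eta\beta}{18}$ in place of the paper's two-case split on whether $\norm{V}\ff \leq 1/12$ --- is legitimate and arguably cleaner; it even leaves you roughly four times the error budget of the paper's Case II.

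There is, however, one concrete gap: the quadratic term. You place $\eta^2 (\barD^{(k,t)})\zz\barD^{(k,t)}$ wholesale into $R$ and bound it by $\eta^2\dkh{(2q+1)M_G + \beta M_E}^2$. The dangerous piece is $\eta^2\beta^2 M_E^2$: under $\eta < \bar{\eta}_1 = 216/(49\beta^2)$ this is only $O(\eta M_E^2)$, whereas the slack remaining in the target $\norm{R}\ff \leq \tfrac{5\eta\beta}{18}$ after paying for the $\barT^{(k,t)}$ contribution is $O(\eta(1+(2q+1)M_G))$. Since $M_E$ is a supremum of $\norm{E(\cdot)}\ff$ over the large ball $\cB$ and bears no relation to $M_G$, $q$, or $\underline{\beta}_2$, the inequality does not follow from the stated parameter constraints. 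The repair uses an ingredient you already deployed for the cross term: split $\barE^{(k,t)} = E(\barX^{(k,t)}) + (\barE^{(k,t)} - E(\barX^{(k,t)}))$ inside $\barD^{(k,t)}$ as well, which gives $\norm{\barD^{(k,t)}}\ff \leq (2q+1)M_G + 1 + \tfrac{1}{6}\sqrt{7/6}\,\beta$, the middle term using $\barX^{(k,t)} \in \cR$ rather than $\cB$. The quadratic term then decomposes into an $\eta^2(1+(2q+1)M_G)^2$ part controlled by $\bar{\eta}_4$ and an $\eta^2\beta^2$ part controlled by $\bar{\eta}_1$, and the arithmetic closes. This is exactly why the paper isolates $Y_k = \beta(E(\barX^{(k,t)}) - \barE^{(k,t)}) - \barT^{(k,t)}$ with $\norm{Y_k}\ff \leq 1 + (2q+1)M_G$ and keeps $\eta\beta E(\barX^{(k,t)})$ inside the contracting factor: the constant $M_E$ never enters the averaged recursion.
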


\begin{proof}
	Since $\norm{\bfX^{(k, t)}}\ff \leq \sqrt{7dp / 6} + \sqrt{d}$,
	we have $X_i^{(k, t)} \in \cB$.
	Then it can be readily verified that
	\begin{equation*}
		\begin{aligned}
			\norm{ E (\barX^{(k, t)}) - \barE^{(k, t)} }\ff
			& \leq \dfrac{1}{d} \sumiid \norm{ E (\barX^{(k, t)}) 
				- E (X^{(k, t)}_i) }\ff \\
			& \leq \dfrac{L_E}{d} \sumiid \norm{\barX^{(k, t)} - X_i^{(k, t)}}\ff \\
			& \leq \dfrac{L_E}{\sqrt{d}} \norm{\bar{\bfX}^{(k, t)} - \bfX^{(k, t)}}\ff
			\leq \dfrac{1}{\beta}.
		\end{aligned}
	\end{equation*}
	Moreover, according to Lemma \ref{le:T}, we have
	\begin{equation*}
		\begin{aligned}
			\barX^{(k, t + 1)} 
			& = \barX^{(k, t)} - \eta \barD^{(k, t)} 
			=  \barX^{(k, t)} - \eta \barS^{(k, t)}
			= \barX^{(k, t)} - \eta (\barT^{(k, t)} + \beta \barE^{(k, t)}) \\
			& = \barX^{(k, t)} - \eta \beta E (\barX^{(k, t)}) 
			+ \eta \beta (E (\barX^{(k, t)}) - \barE^{(k, t)}) - \eta \barT^{(k, t)} \\
			& = \barX^{(k, t)} - \eta \beta E (\barX^{(k, t)}) + \eta Y_k,
		\end{aligned}
	\end{equation*}
	where $Y_k := \beta (E (\barX^{(k, t)}) - \barE^{(k, t)}) - \barT^{(k, t)}$ and $Y_k$ satisfies
	\begin{equation*}
		\norm{Y_k}\ff \leq \beta \norm{ E (\barX^{(k, t)}) - \barE^{(k, t)} }\ff 
		+ \norm{\barT^{(k, t)}}\ff
		\leq 1 + (2q + 1) M_G.
	\end{equation*}
	Then by straightforward calculations, we can obtain that
	\begin{equation*}
		\begin{aligned}
			& (\barX^{(k, t + 1)})\zz \barX^{(k, t + 1)} - I_p \\ 
			= {} & \dkh{\barX^{(k, t)} - \eta \beta E (\barX^{(k, t)}) + \eta Y_k}\zz
			\dkh{\barX^{(k, t)} - \eta \beta E (\barX^{(k, t)}) + \eta Y_k} - I_p \\
			= {} & (\barX^{(k, t)})\zz \barX^{(k, t)} - I_p 
			- 2 \eta \beta (\barX^{(k, t)})\zz \barX^{(k, t)} 
			\dkh{(\barX^{(k, t)})\zz \barX^{(k, t)} - I_p} \\
			& + \eta (\barX^{(k, t)})\zz Y_k + \eta^2 \beta^2 (\barX^{(k, t)})\zz \barX^{(k, t)} \dkh{(\barX^{(k, t)})\zz \barX^{(k, t)} - I_p}^2 \\
			& - \eta^2 \beta \dkh{(\barX^{(k, t)})\zz \barX^{(k, t)} - I_p} (\barX^{(k, t)})\zz Y_k 
			+ \eta Y_k\zz \barX^{(k, t)} \\
			& - \eta^2 \beta Y_k\zz \barX^{(k, t)} \dkh{(\barX^{(k, t)})\zz \barX^{(k, t)} - I_p} 
			+ \eta^2 Y_k\zz Y_k \\
			= {} & \dkh{I_n - \eta \beta (\barX^{(k, t)})\zz \barX^{(k, t)}}^2 
			\dkh{(\barX^{(k, t)})\zz \barX^{(k, t)} - I_p} \\
			& - \eta^2 \beta^2 (\barX^{(k, t)})\zz \barX^{(k, t)} 
			\dkh{(\barX^{(k, t)})\zz \barX^{(k, t)} - I_p} 
			+ \eta (\barX^{(k, t)})\zz Y_k + \eta Y_k\zz \barX^{(k, t)} \\
			& - \eta^2 \beta \dkh{(\barX^{(k, t)})\zz \barX^{(k, t)} - I_p} (\barX^{(k, t)})\zz Y_k \\
			& - \eta^2 \beta Y_k\zz \barX^{(k, t)} \dkh{(\barX^{(k, t)})\zz \barX^{(k, t)} - I_p}
			+ \eta^2 Y_k\zz Y_k.
		\end{aligned}
	\end{equation*}

	This further implies that
	\begin{equation*}
		\begin{aligned}
			& \norm{(\barX^{(k, t + 1)})\zz \barX^{(k, t + 1)} - I_p}\ff \\
			\leq {} & \norm{\dkh{I_n - \eta \beta (\barX^{(k, t)})\zz \barX^{(k, t)}}^2 
			\dkh{(\barX^{(k, t)})\zz \barX^{(k, t)} - I_p}}\ff \\
			& + \eta^2 \beta^2 \norm{ (\barX^{(k, t)})\zz \barX^{(k, t)} \dkh{(\barX^{(k, t)})\zz \barX^{(k, t)} - I_p} }\ff 
			+ 2 \eta \norm{(\barX^{(k, t)})\zz Y_k}\ff \\
			& + 2 \eta^2 \beta \norm{\dkh{(\barX^{(k, t)})\zz \barX^{(k, t)} - I_p} (\barX^{(k, t)})\zz Y_k}\ff
			+ \eta^2 \norm{Y_k\zz Y_k}\ff \\
			\leq {} & \dkh{1 - \dfrac{5}{6} \eta \beta}^2 
			\norm{(\barX^{(k, t)})\zz \barX^{(k, t)} - I_p}\ff 
			+ \dfrac{49}{216} \eta^2 \beta^2 + \dfrac{7}{3} (1 + (2q + 1) M_G) \eta \\
			& + \dfrac{7}{18} (1 + (2q + 1) M_G) \eta^2 \beta + (1 + (2q + 1) M_G)^2 \eta^2 \\
			\leq {} & \dkh{1 - \dfrac{5}{6} \eta \beta}^2 \norm{(\barX^{(k, t)})\zz (\barX^{(k, t)}) - I_p}\ff
			+ \eta \dkh{4 + 3 (2q + 1)M_G} + \eta^2 \dkh{1 + (2q + 1) M_G}^2, 
		\end{aligned}
	\end{equation*}
	where the last equality follows from the condition
	$\eta < \min\{\bar{\eta}_1, \bar{\eta}_2\}$.
	Now we consider the above relationship in the following two cases.
	
	\paragraph{Case I: $\norm{(\barX^{(k, t)})\zz \barX^{(k, t)} - I_p}\ff \leq 1/ 12$.} 
	Since $\eta < \bar{\eta}_3$,
	we have
	\begin{equation*}
		\norm{(\barX^{(k, t + 1)})\zz \barX^{(k, t + 1)} - I_p}\ff 
		\leq \norm{(\barX^{(k, t)})\zz \barX^{(k, t)} - I_p}\ff + \dfrac{1}{12} = \dfrac{1}{6}.
	\end{equation*}
	
	\paragraph{Case II: $\norm{(\barX^{(k, t)})\zz \barX^{(k, t)} - I_p}\ff > 1/ 12$.}
	It can be readily verified that
	\begin{equation*}
		\begin{aligned}
			& \norm{(\barX^{(k, t + 1)})\zz \barX^{(k, t + 1)} - I_p}\ff 
			- \norm{(\barX^{(k, t)})\zz \barX^{(k, t)} - I_p}\ff \\
			\leq {} & \dkh{\dkh{1 - \dfrac{5}{6} \eta \beta}^2 - 1}
			\norm{(\barX^{(k, t)})\zz \barX^{(k, t)} - I_p}\ff \\
			& + \eta \dkh{4 + 3 (2q + 1) M_G} + \eta^2 \dkh{1 + (2q + 1) M_G}^2 \\
			\leq {} & - \dfrac{5}{72} \eta \beta  + \eta \dkh{4 + 3 (2q + 1) M_G} 
			+ \eta^2 \dkh{1 + (2q + 1) M_G}^2,
		\end{aligned}
	\end{equation*}
	which together with $\beta > \underline{\beta}_2$
	and $\eta < \bar{\eta}_4$
	yields that
	\begin{equation*}
		\norm{(\barX^{(k, t + 1)})\zz \barX^{(k, t + 1)} - I_p}\ff 
		- \norm{(\barX^{(k, t)})\zz \barX^{(k, t)} - I_p}\ff \leq 0.
	\end{equation*}
	Hence, we arrive at 
	$\norm{(\barX^{(k, t + 1)})\zz \barX^{(k, t + 1)} - I_p}\ff 
	\leq \norm{(\barX^{(k, t)})\zz \barX^{(k, t)} - I_p}\ff \leq 1 / 6$.
	Combing the above two cases, we complete the proof.
\end{proof}

Lemma \ref{le:bound-X-inner} illustrates that, in the inner iteration of VRSGT,
the iterates are restricted in the bounded region $\cR$ if some conditions hold.
In fact, the same is true for the outer iteration of VRSGT.

\begin{lemma} \label{le:bound-X-outer}
	Let all the conditions in Lemma \ref{le:bound-X-inner} hold.
	Suppose $\barX^{(k, 0)} \in \cR$, 
	$\|T^{(k, 0)}_i\|\ff \leq (2q + 1) M_G$,
	$\norm{\bfX^{(k, 0)}}\ff \leq \sqrt{7dp / 6} + \sqrt{d}$,
	and $ \norm{\bar{\bfX}^{(k, 0)} - \bfX^{(k, 0)}}\ff \leq \sqrt{d} / (\beta^2 L_e)$.
	Then we have $\barX^{(k + 1, 1)} \in \cR$.
\end{lemma}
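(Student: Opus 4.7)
The plan is to reduce this lemma almost entirely to the argument already carried out in Lemma~\ref{le:bound-X-inner}. The crucial observation is that, although the iteration counter jumps from the ``outer'' index $(k,0)$ to the ``first inner'' index $(k+1,1)$, the update rule for the \emph{averaged} iterate has exactly the same form as in a standard inner step. Indeed, since $W$ is doubly stochastic we have $\bfJ \bfW = \bfJ$, hence averaging the $\bfX$-update gives
\begin{equation*}
\barX^{(k+1,1)} \;=\; \barX^{(k,0)} \;-\; \eta\, \barD^{(k,0)}.
\end{equation*}
This is structurally identical to the recursion $\barX^{(k,t+1)} = \barX^{(k,t)} - \eta\, \barD^{(k,t)}$ exploited in the proof of Lemma~\ref{le:bound-X-inner}.

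Next I would invoke Lemma~\ref{le:T} to rewrite $\barD^{(k,0)}$. Strictly speaking, Lemma~\ref{le:T} is stated for $t \in [q+1]$, but by the reassignment $\bfS^{(k,0)} := \bfS^{(k,q+1)}$ and $\bfD^{(k,0)} := \bfD^{(k,q+1)}$ in the algorithm, the identity $S^{(k,0)}_i = T^{(k,0)}_i + \beta E(X^{(k,0)}_i)$ carries over. Averaging yields $\barD^{(k,0)} = \barT^{(k,0)} + \beta\,\barE^{(k,0)}$, so that
\begin{equation*}
\barX^{(k+1,1)} \;=\; \barX^{(k,0)} - \eta\beta\, E(\barX^{(k,0)}) \;+\; \eta\, Y_{k},
\qquad Y_{k} := \beta\bigl( E(\barX^{(k,0)}) - \barE^{(k,0)} \bigr) - \barT^{(k,0)}.
\end{equation*}
This is precisely the form of the ``perturbed Landing step'' handled in Lemma~\ref{le:bound-X-inner}.

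The remaining work is to verify that the perturbation $Y_k$ satisfies the same bound $\|Y_k\|\ff \leq 1 + (2q+1) M_G$. For the first term of $Y_k$, the hypothesis $\|\avX^{(k,0)} - \bfX^{(k,0)}\|\ff \leq \sqrt{d}/(\beta^2 L_E)$ combined with the Lipschitz constant $L_E$ (and the fact that each $X_i^{(k,0)} \in \cB$, which follows from the bound on $\|\bfX^{(k,0)}\|\ff$) yields $\|E(\barX^{(k,0)}) - \barE^{(k,0)}\|\ff \leq 1/\beta$, exactly as in the inner lemma. For the second term, the assumption $\|T^{(k,0)}_i\|\ff \leq (2q+1) M_G$ gives $\|\barT^{(k,0)}\|\ff \leq (2q+1) M_G$.

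Once $Y_k$ is controlled in this way, the algebraic expansion of $(\barX^{(k+1,1)})\zz \barX^{(k+1,1)} - I_p$, the split into the two cases $\|(\barX^{(k,0)})\zz \barX^{(k,0)} - I_p\|\ff \leq 1/12$ and $> 1/12$, and the stepsize/penalty conditions $\eta < \min\{\bar{\eta}_1,\bar{\eta}_2,\bar{\eta}_3,\bar{\eta}_4\}$ and $\beta > \underline{\beta}_2$ are used \emph{verbatim} as in Lemma~\ref{le:bound-X-inner}, yielding $\|(\barX^{(k+1,1)})\zz \barX^{(k+1,1)} - I_p\|\ff \leq 1/6$, i.e.\ $\barX^{(k+1,1)} \in \cR$. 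The only conceptual subtlety — and the mildly delicate point — is ensuring that Lemma~\ref{le:T} really does apply at the reindexed slot $t=0$; once this is justified by the algorithm's bookkeeping, the proof is a direct transcription of the inner-iteration argument with $(k,t)\mapsto (k,0)$ and $(k,t+1)\mapsto(k+1,1)$.
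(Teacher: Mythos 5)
Your proposal is correct and matches the paper's approach: the paper itself omits this proof, stating only that it is ``similar to that of Lemma~\ref{le:bound-X-inner},'' and your argument is precisely the fleshed-out version of that similarity (the averaged update $\barX^{(k+1,1)} = \barX^{(k,0)} - \eta\,\barD^{(k,0)}$ has the same form as the inner recursion, the bound on $Y_k$ transfers, and the two-case analysis applies verbatim). Your attention to the reindexing $\bfS^{(k,0)} := \bfS^{(k,q+1)}$, $\bfD^{(k,0)} := \bfD^{(k,q+1)}$ so that Lemma~\ref{le:T} carries over to the slot $t=0$ is exactly the bookkeeping point that makes the omitted proof go through.
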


\begin{proof}
	The proof is similar to that of Lemma \ref{le:bound-X-inner},
	which is omitted here.
\end{proof}

\begin{proposition}\label{prop:bound}
	Suppose the conditions in Assumptions \ref{asp:network} and \ref{asp:smooth} hold.
	Let $\{\bfX^{(k, t)}\}$ be the iterate sequence 
	generated by Algorithm \ref{alg:VRSGT} with $X_{\mathrm{init}} \in \cR$
	and the stepsize $\eta$ and penalty parameter $\beta$ satisfy
	\begin{equation*}
		0 < \eta < \min\hkh{
			\bar{\eta}_1, \bar{\eta}_2, \bar{\eta}_3, \bar{\eta}_4,
			\bar{\eta}_5, \bar{\eta}_6
		},
		\mbox{~and~}
		\beta > \max \hkh{
			1, \underline{\beta}_2, \underline{\beta}_3, \underline{\beta}_4, \underline{\beta}_5
		},
	\end{equation*}
	respectively.
	Then for any $k \in \bN$ and $t \in [q + 1]$, it holds that 
	\begin{equation}\label{eq:bound-x}
		\begin{aligned}
		\barX^{(k, t)} \in \cR,~ 
		\norm{\avX^{(k, t)} - \bfX^{(k, t)}}\ff \leq \dfrac{\sqrt{d}}{\beta^2 L_E},~
		\norm{\bfX^{(k, t)}}\ff \leq \sqrt{\dfrac{7dp}{6}} + \sqrt{d}, \\
		\norm{\avD^{(k, t)} - \bfD^{(k, t)}}\ff \leq M_D,~
		\mbox{~and~} \norm{\bfD^{(k, t)}}\ff \leq C_D.
		\end{aligned}
	\end{equation}
\end{proposition}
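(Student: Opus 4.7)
The plan is to establish all five inequalities in \eqref{eq:bound-x} \emph{simultaneously} by induction on the global iteration index, where $(k, q+1)$ is identified with $(k+1, 0)$. The five bounds are intertwined and cannot be proved in isolation: the orthogonality bound $\barXk \in \cR$ is obtained via Lemma~\ref{le:bound-X-inner}, which already requires the consensus bound and the norm bound on $\bfX^{(k,t)}$; the consensus bound in turn is only preserved if the tracking error on $\bfD^{(k,t)}$ is controlled; and bounding $\|\bfD^{(k,t)}\|\ff$ requires both the tracking slack and the local gradient bounds that follow from $X_i^{(k,t)} \in \cB$.

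For the base case $(k,t) = (0,0)$, the initialization $\bfX^{(0,0)} = (\bfone_d \otimes I_n) X_{\mathrm{init}}$ with $X_{\mathrm{init}} \in \cR$ directly gives $\bar X^{(0,0)} = X_{\mathrm{init}} \in \cR$, zero consensus error, and $\|\bfX^{(0,0)}\|\ff = \sqrt{d}\,\|X_{\mathrm{init}}\|\ff \leq \sqrt{7dp/6}$ from $\|X\zz X\|_2 \leq 7/6$ on $\cR$. The tracking slack $\|\avD^{(0,0)} - \bfD^{(0,0)}\|\ff = M_D$ is immediate by the definition of $M_D$, and $\|\bfD^{(0,0)}\|\ff = \|\bfH(\bfX^{(0,0)})\|\ff \leq C_D$ follows from the defining bounds $M_G$, $M_E$ together with the definition of $C_D$.

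For the inductive step, suppose all five bounds hold at $(k,t)$. First, Lemma~\ref{le:bound-T} converts the norm bound on $\bfX^{(k,t)}$ into $\|T^{(k,t)}_i\|\ff \leq (2q+1) M_G$, and then Lemma~\ref{le:bound-X-inner} (or Lemma~\ref{le:bound-X-outer} at an outer-inner boundary) propagates $\barX^{(k,t+1)} \in \cR$. For the consensus error, the identities $\bfJ\bfW = \bfW\bfJ = \bfJ$ yield
\begin{equation*}
\avX^{(k,t+1)} - \bfX^{(k,t+1)} = (\bfJ - \bfW)\dkh{\bfX^{(k,t)} - \avX^{(k,t)}} - \eta(\bfW - \bfJ)\dkh{\bfD^{(k,t)} - \avD^{(k,t)}},
\end{equation*}
so that the spectral contraction $\|(\bfW - \bfJ)\mathbf{Y}\|\ff \leq \lambda\|\mathbf{Y} - \bar{\mathbf{Y}}\|\ff$ on the orthogonal complement of $\mathrm{range}(\bfJ)$, combined with the inductive slacks $\sqrt{d}/(\beta^2 L_E)$ and $M_D$, reproduces the consensus bound provided $\eta \leq \bar{\eta}_5$. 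The $\bfX$-norm bound then follows by triangle inequality, noting that $\beta^2 L_E \geq 1$ (guaranteed by $\beta \geq \underline{\beta}_3$) absorbs the consensus slack into the $\sqrt{d}$ term. For the tracking error, the analogous decomposition
\begin{equation*}
\bfD^{(k,t+1)} - \avD^{(k,t+1)} = (\bfW - \bfJ)\dkh{\bfD^{(k,t)} - \avD^{(k,t)}} + (I - \bfJ)\dkh{\bfS^{(k,t+1)} - \bfS^{(k,t)}}
\end{equation*}
permits $\lambda$-contraction of the first term and a Lipschitz bound $\|\bfS^{(k,t+1)} - \bfS^{(k,t)}\|\ff \leq \hat{L}\,\|\bfX^{(k,t+1)} - \bfX^{(k,t)}\|\ff$ of the second (valid because the iterates stay in $\cB$), with $\|\bfX^{(k,t+1)} - \bfX^{(k,t)}\|\ff$ controlled in turn by the consensus slack and $\eta\|\bfD^{(k,t)}\|\ff \leq \eta C_D$; the combined thresholds $\eta \leq \bar{\eta}_6$ and $\beta \geq \underline{\beta}_5$ are exactly what is required to keep the tracking slack at $M_D$. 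Finally, $\|\bfD^{(k,t+1)}\|\ff \leq C_D$ follows from $\|\avD^{(k,t+1)}\|\ff \leq \sqrt{d}\dkh{(2q+1) M_G + \beta M_E}$ (via Lemmas~\ref{le:T} and~\ref{le:bound-T}) plus the freshly established tracking slack $M_D$, matching the definition of $C_D$.

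The main obstacle is precisely that the five bounds form a coupled fixed-point system rather than a sequential chain: every stepsize threshold $\bar{\eta}_i$ and penalty threshold $\underline{\beta}_j$ in Categories III and IV must be tailored so that each contraction factor stays strictly below one and each slack term fits within the next iterate's budget. The delicate point is that $M_D$ and $C_D$ feed into each other through the definition $C_D = M_D + \sqrt{d}\dkh{(2q+1)M_G + \beta M_E}$ and through the stepsize bound $\bar{\eta}_5 = \sqrt{d}(1-\lambda)/(\beta^2 L_E \lambda C_D)$, so all constants in Categories I--IV have to be verified consistently; tracking which prior estimate is invoked at each step, and which of Lemmas~\ref{le:bound-X-inner}--\ref{le:bound-X-outer} applies at the boundary $t = q+1$, is the principal bookkeeping challenge.
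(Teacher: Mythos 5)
Your proposal is correct and follows essentially the same route as the paper's proof: a joint induction over the five coupled bounds, establishing the consensus bound first via the $(\bfW - \bfJ)$ contraction with $\eta < \bar{\eta}_5$, then invoking Lemmas~\ref{le:bound-T} and \ref{le:bound-X-inner}/\ref{le:bound-X-outer} for $\barX^{(k,t)} \in \cR$, and closing the loop on the tracking slack and $\norm{\bfD^{(k,t)}}\ff$ with $\bar{\eta}_6$, $\underline{\beta}_4$, and $\underline{\beta}_5$. The only cosmetic difference is that you contract $\bfD^{(k,t)} - \avD^{(k,t)}$ (bounded by $M_D$) in the consensus recursion where the paper contracts $\bfD^{(k,t)}$ itself (bounded by $C_D$), which is a marginally tighter but equivalent estimate given the stated thresholds.
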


	\begin{proof}
	We use mathematical induction to prove this proposition.
	The argument \eqref{eq:bound-x} directly holds 
	at $(\bfX^{(0, 0)}, \bfD^{(0, 0)})$ resulting from the initialization.
	To complete the proof, we consider the following two scenarios.
	
	\paragraph{Case I: Inner iterations.} 
	We assume that the argument \eqref{eq:bound-x} holds at $(\bfX^{(k, t)}, \bfD^{(k, t)})$,
	and investigate the situation at $(\bfX^{(k, t + 1)}, \bfD^{(k, t + 1)})$.
	
	Our first purpose is to show that 
	$\norm{\avX^{(k, t + 1)} - \bfX^{(k, t + 1)}}\ff \leq \sqrt{d} / (\beta^2 L_E)$.
	By straightforward calculations, we can attain that
	\begin{equation*}
		\begin{aligned}
			\norm{\avX^{(k, t + 1)} - \bfX^{(k, t + 1)}}\ff
			& = \norm{\dkh{\bfW - \bfJ} \dkh{\avX^{(k, t)} - \bfX^{(k, t)}} 
				+ \eta \dkh{\bfW - \bfJ} \bfD^{(k, t)}}\ff \\
			& \leq \norm{\dkh{\bfW - \bfJ} \dkh{\avX^{(k, t)} - \bfX^{(k, t)}}}\ff
			+ \eta \norm{\dkh{\bfW - \bfJ} \bfD^{(k, t)}}\ff \\
			& \leq \lambda \norm{\avX^{(k, t)} - \bfX^{(k, t)}}\ff
			+ \eta \lambda \norm{\bfD^{(k, t)}}\ff \\
			& \leq \lambda \norm{\avX^{(k, t)} - \bfX^{(k, t)}}\ff
			+ \eta \lambda C_D,
		\end{aligned}
	\end{equation*}
	which together with 
	$\eta < \bar{\eta}_5$
	implies that
	\begin{equation*}
		\norm{\avXkn - \bfXkn}\ff 
		\leq \dfrac{\sqrt{d}  \lambda}{\beta^2 L_E}
		+ \dfrac{\sqrt{d} \dkh{1 - \lambda}}{\beta^2 L_E}
		\leq \dfrac{\sqrt{d}}{\beta^2 L_E}.
	\end{equation*}
	
	Then we aim to prove that $\barX^{(k, t + 1)} \in \cR$ 
	and $\norm{\bfX^{(k, t + 1)}}\ff \leq \sqrt{7dp/6} + \sqrt{d}$.
	According to Lemmas \ref{le:bound-T} and \ref{le:bound-X-inner}, 
	we have $\barXkn \in \cR$.
	And it follows that
	\begin{equation*}
		\norm{\bfX^{(k, t + 1)}}\ff 
		\leq \norm{\avX^{(k, t + 1)}}\ff + \norm{\avX^{(k, t + 1)} - \bfX^{(k, t + 1)}}\ff
		\leq \sqrt{\dfrac{7dp}{6}} + \dfrac{\sqrt{d}}{\beta^2 L_E}
		\leq \sqrt{\dfrac{7dp}{6}} + \sqrt{d},
	\end{equation*}
	as a result of the condition $\beta > \underline{\beta}_3$.
	
	In order to finish the proof, we still have to show that
	$\norm{\avD^{(k, t + 1)} - \bfD^{(k, t + 1)}}\ff \leq M_D$
	and $\norm{\bfD^{(k, t + 1)}}\ff \leq C_D$.
	In fact, we have 
	\begin{equation*}
		\begin{aligned}
			\norm{S^{(k, t + 1)}_i - S^{(k, t)}_i}\ff 
			& \leq \dfrac{1}{\tau} \sum_{j \in \cC^{(k + 1, t)}} 
			\norm{H^{[j]}_{i} (X^{(k, t + 1)}_i) - H^{[j]}_{i} (X^{(k, t)}_i)}\ff \\
			& \leq (L_G + \beta L_E) \norm{X^{(k, t + 1)}_i - X^{(k, t)}_i}\ff, \quad \iid,
		\end{aligned}
	\end{equation*}
	which together with $\beta > \underline{\beta}_4$
	implies that
	\begin{equation*}
		\norm{\bfS^{(k, t + 1)} - \bfS^{(k, t)}}\ff \leq 2 \beta L_E \norm{\bfX^{(k, t + 1)} - \bfX^{(k, t)}}\ff.
	\end{equation*}
	Moreover, it can be readily verified that
	\begin{equation*}
		\begin{aligned}
			\norm{\bfX^{(k, t + 1)} - \bfX^{(k, t)}}\ff 
			& = \norm{\bfW \dkh{\bfX^{(k, t)} - \eta \bfD^{(k, t)}} - \bfX^{(k, t)}}\ff \\
			& = \norm{\dkh{I_{dn} - \bfW} \dkh{\avX^{(k, t)} - \bfX^{(k, t)}} 
				- \eta \bfW \bfD^{(k, t)}}\ff \\
			& \leq 2 \norm{\avX^{(k, t)} - \bfX^{(k, t)}}\ff + \eta \norm{\bfD^{(k, t)}}\ff \\
			& \leq \dfrac{2 \sqrt{d}}{\beta^2 L_E} + \eta C_D
			\leq \dfrac{3 \sqrt{d}}{\beta^2 L_E},
		\end{aligned}
	\end{equation*}
	where the last inequality follows from $\eta < \bar{\eta}_6$.
	Combing the above two relationships, we can obtain that
	\begin{equation*}
		\begin{aligned}
			& \norm{ \avD^{(k, t + 1)} - \bfD^{(k, t + 1)} }\ff \\
			= {} & \norm{ \dkh{\bfW - \bfJ} \dkh{\avD^{(k, t)} - \bfD^{(k, t)}} 
				- \dkh{I_{dn} - \bfJ} \dkh{\bfS^{(k, t + 1)} - \bfS^{(k, t)}} }\ff \\
			\leq {} & \norm{\dkh{\bfW - \bfJ} \dkh{\avD^{(k, t)} - \bfD^{(k, t)}}}\ff
			+ \norm{\dkh{I_{dn} - \bfJ} \dkh{\bfS^{(k, t + 1)} - \bfS^{(k, t)}}}\ff \\
			\leq {} & \lambda \norm{\avD^{(k, t)} - \bfD^{(k, t)}}\ff
			+ \norm{\bfS^{(k, t + 1)} - \bfS^{(k, t)}}\ff \\
			\leq {} & \lambda M_D + \dfrac{6 \sqrt{d}}{\beta}
			\leq M_D,
		\end{aligned}
	\end{equation*}
	where the last inequality follows from 
	$\beta > \underline{\beta}_5$.
	Therefore, we can obtain that
	\begin{equation*}
		\begin{aligned}
			\norm{\bfD^{(k, t + 1)}}\ff 
			& \leq \norm{\avD^{(k, t + 1)} - \bfD^{(k, t + 1)}}\ff + \norm{\avD^{(k, t + 1)}}\ff \\
			& = \norm{\avD^{(k, t + 1)} - \bfD^{(k, t + 1)}}\ff + \norm{\avS^{(k, t + 1)}}\ff \\
			& \leq M_D
			+ \sqrt{d} \dkh{(2q + 1) M_G + \beta M_E} 
			= C_D. 
		\end{aligned}
	\end{equation*}

	\paragraph{Case II: Outer iterations.} 
	Using the similar techniques,
	we can prove that the argument \eqref{eq:bound-x} holds at $(\bfX^{(k + 1, 1)}, \bfD^{(k + 1, 1)})$
	provided that it holds at $(\bfX^{(k, 0)}, \bfD^{(k, 0)})$.
	The corresponding proof is omitted here.

	Combining the above two cases, we complete the proof.
\end{proof}

Proposition \ref{prop:bound}
demonstrates that the sequence ${\bfX^{(k, t)}}$ is bounded
and the average of iterates $\barX^{(k, t)}$ is restricted in the region $\cR$.
Therefore, we can directly apply the existing convergence results of decentralized SVRG algorithms established in \cite{Sun2020improving,Xin2022fast},
although they impose the global Lipschitz smoothness property.
The following proposition is an implication of Theorem 1 in \cite{Sun2020improving}.

\begin{proposition} \label{prop:descent}
	Let the conditions in Assumptions \ref{asp:network} and \ref{asp:smooth} hold
	and $\{\bfX^{(k, t)}\}$ be the iterate sequence 
	generated by Algorithm \ref{alg:VRSGT} with $X_{\mathrm{init}} \in \cR$ and $t = q = \sqrt{l}$.
	Suppose the stepsize $\eta$ and penalty parameter $\beta$ satisfy
	\begin{equation*}
		0 < \eta < \min\hkh{
			\bar{\eta}_1, \bar{\eta}_2, \bar{\eta}_3, \bar{\eta}_4,
			\bar{\eta}_5, \bar{\eta}_6, \bar{\eta}_7, \bar{\eta}_8,
			\bar{\eta}_9
		},
		\mbox{~and~}
		\beta > \max \hkh{
			1, \underline{\beta}_2, \underline{\beta}_3, \underline{\beta}_4, \underline{\beta}_5
		},
	\end{equation*}
	respectively.
	Then it holds that
	\begin{equation*}
		\dfrac{1}{\sqrt{l} K} \sum_{k = 0}^K \sum_{t = 0}^q
		\bE \fkh{ \norm{\dfrac{1}{d} \sumiid H_i (X^{(k, t)}_i)}\fs
		+ \dfrac{1}{d} \sumiid \norm{X^{(k, t)}_i - \barX^{(k, t)}}\fs }
	\leq \dfrac{C_4}{\sqrt{l} K},
	\end{equation*}
	where $C_4 > 0$ is a constant defined in \eqref{eq:constants-2},
	and the expectation is taken over the randomness from the sampling step.
\end{proposition}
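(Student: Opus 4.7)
The strategy is to reduce the statement to the decentralized SVRG gradient tracking analysis in \cite{Sun2020improving}, which is proved under a global Lipschitz smoothness assumption. The paper's Assumption \ref{asp:smooth} only provides a local Lipschitz bound, so the key preparatory step is to confine the trajectory to a compact set on which local and global smoothness coincide. Proposition \ref{prop:bound} does exactly that: for the chosen stepsize and penalty parameter, every iterate $\bfX^{(k,t)}$ lies in $\cB$ and every $\barX^{(k,t)} \in \cR$. Consequently, on the trajectory the maps $X \mapsto H^{[j]}_i(X)$ and $X \mapsto H_i(X)$ are Lipschitz with constant $\hat{L} = L_G + \beta L_E$, the definitions $M_G$, $M_E$ are valid upper bounds for the gradient pieces, and $\|\bfD^{(k,t)}\|\ff \le C_D$. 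These bounds are the effective replacements for the global-Lipschitz hypothesis in \cite{Sun2020improving}.

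With the iterates trapped in $\cB$, the algorithm reduces to a decentralized SVRG-with-gradient-tracking scheme applied to the unconstrained surrogate $h(\bfX)$, but driven by the approximate gradient field $H_i$ rather than $\nabla h_i$. The plan is to track the standard three-term Lyapunov function $V^{(k,t)} := h(\barX^{(k,t)}) + \alpha_1 \|\avX^{(k,t)}-\bfX^{(k,t)}\|\fs / d + \alpha_2 \|\avD^{(k,t)}-\bfD^{(k,t)}\|\fs / d$, and derive one-step inequalities of three types: (i) a descent on $h(\barX^{(k,t)})$ obtained from $\hat{L}$-smoothness applied to the $\bfX$-step $\barXkn = \barX^{(k,t)} - \eta \barD^{(k,t)}$, together with the identity $\barD = \barS$; (ii) a consensus contraction $\bE\|\avXkn - \bfXkn\|\fs \le (1 - (1-\lambda^2)/2) \|\avX - \bfX\|\fs + O(\eta^2)\|\bfD\|\fs$ from $\bfW - \bfJ$ having spectral radius $\lambda < 1$; (iii) a tracking error inequality $\bE\|\avDkn - \bfDkn\|\fs \le (1 - (1-\lambda^2)/2)\|\avD-\bfD\|\fs + O(\hat{L}^2)\|\bfS^{(k,t+1)} - \bfS^{(k,t)}\|\fs$, where the $\bfS$-difference absorbs both the change in $\bfX$ (Lipschitz-controlled by $\hat{L}$) and the SVRG stochastic fluctuation.

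The SVRG structure with inner batch $\tau$ and epoch length $q$ is what allows the variance of $\bfS^{(k,t+1)} - \bfH(\bfXkn)$ to be bounded by $O(\hat{L}^2/\tau)$ times an iterate gap over the current epoch, summed over the epoch. The calibrated choice $\tau = q = \sqrt{l}$ balances the per-step sample cost against the accumulated variance, producing the characteristic $1/\sqrt{l}$ factor. After combining the three one-step inequalities with coefficients chosen so that the residual coefficients on $\|\bar{H}\|\fs$, on consensus, and on tracking error are precisely $C_1$, $C_2$, $C_3$ from \eqref{eq:constants-2}, the conditions $\eta < \min\{\bar{\eta}_7, \bar{\eta}_8, \bar{\eta}_9\}$ guarantee $C_1, C_2, C_3 > 0$. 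Summing the resulting Lyapunov inequality over $t = 0,\dots,q$ and $k=0,\dots,K$, telescoping $V$, and using $h(\barX^{(0,0)}) - \underline{f} \le f(X_{\mathrm{init}}) - \underline{f}$ (valid because $h$ and $f$ agree up to penalty terms that vanish at feasible points and $h$ is bounded on $\cB$) yields the displayed bound with constant $C_4$.

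The main obstacle is the third one-step estimate: the tracking error bound must absorb cross terms of the form $\langle \avD - \bfD, \bfS^{(k,t+1)} - \bfS^{(k,t)}\rangle$ using Young's inequality with weight $\gamma$, which is where the factor $1 + 1/\gamma$ in the definitions of $C_1$, $C_2$, $C_3$ originates. A related delicacy is that the estimator tracks $\bfH$, not $\nabla h$, so the descent inequality on $h$ picks up an inner-product with the approximation error $\nabla h - \bfH$; Lemma \ref{le:expen} (together with boundedness of $X\zz X - I_p$ on $\cR$) is what allows this error term to be dominated by the $\beta \|X\zz X - I_p\|\fs$ contribution and reabsorbed into the $\|H\|\fs$ term in the final stationarity measure. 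Once these cross-term absorptions are arranged, the argument is a straightforward adaptation of Theorem 1 of \cite{Sun2020improving} and the accumulated constants match \eqref{eq:constants-2}.
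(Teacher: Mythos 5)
Your proposal follows the same route as the paper: the paper gives no self-contained proof of this proposition, but instead uses Proposition \ref{prop:bound} to confine $\bfX^{(k,t)}$ to $\cB$ and $\barX^{(k,t)}$ to $\cR$ (so that the local smoothness of Assumption \ref{asp:smooth} substitutes for global Lipschitz smoothness) and then invokes Theorem 1 of \cite{Sun2020improving} directly. Your reconstruction of the underlying Lyapunov/gradient-tracking argument, the role of $\tau=q=\sqrt{l}$, and the origin of $C_1$, $C_2$, $C_3$, $C_4$ is a plausible and consistent filling-in of the details that the paper delegates to that citation.
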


\begin{proof}[\bf Proof of Theorem \ref{thm:convergence}]
	By straightforward calculations, we have
	\begin{equation*}
		\begin{aligned}
			\norm{H (\barX^{(k, t)})}\fs
			& = \norm{\dfrac{1}{d} \sumiid H_i (\barX^{(k, t)})}\fs \\
			& = \norm{ \dfrac{1}{d} \sumiid H_i (\barX^{(k, t)}) - \dfrac{1}{d} \sumiid H_i (X^{(k, t)}_i) + \dfrac{1}{d} \sumiid H_i (X^{(k, t)}_i)}\fs \\
			& \leq 2 \norm{ \dfrac{1}{d} \sumiid H_i (\barX^{(k, t)}) - \dfrac{1}{d} \sumiid H_i (X^{(k, t)}_i) }\fs 
			+ 2 \norm{ \dfrac{1}{d} \sumiid H_i (X^{(k, t)}_i) }\fs \\
			& \leq \dfrac{2}{d} \sumiid \norm{ H_i (\barX^{(k, t)}) - H_i (X^{(k, t)}_i)}\fs
			+ 2 \norm{ \dfrac{1}{d} \sumiid H_i (X^{(k, t)}_i) }\fs \\ 
			& \leq \dfrac{2 \hat{L}^2}{d} \sumiid \norm{X^{(k, t)}_i - \barX^{(k, t)}}\fs
			+ 2 \norm{ \dfrac{1}{d} \sumiid H_i (X^{(k, t)}_i) }\fs,
		\end{aligned}
	\end{equation*}
	which implies that
	\begin{equation*}
		\begin{aligned}
			& \norm{H (\barX^{(k, t)})}\fs + \dfrac{1}{d} \sumiid \norm{X^{(k, t)}_i - \barX^{(k, t)}}\fs \\
			\leq {} & 2 \dkh{\hat{L}^2 + 1} \dkh{ 
				\norm{ \dfrac{1}{d} \sumiid H_i (X^{(k, t)}_i) }\fs
				+ \dfrac{1}{d} \sumiid \norm{X^{(k, t)}_i - \barX^{(k, t)}}\fs 
			}.
		\end{aligned}
	\end{equation*}
	According to Lemma \ref{le:expen}, it follows that
	\begin{equation*}
		\begin{aligned}
			\norm{H (\barX^{(k, t)})}\fs 
			& \geq \norm{G (\barX^{(k, t)})}\fs 
			+ \beta \norm{(\barX^{(k, t)})\zz \barX^{(k, t)} - I_p}\fs \\
			& \geq \norm{G (\barX^{(k, t)})}\fs 
			+ \norm{(\barX^{(k, t)})\zz \barX^{(k, t)} - I_p}\fs, 
		\end{aligned}
	\end{equation*}
	Combing the above two relationships, we can obtain that
	\begin{equation*}
		\begin{aligned}
			& \norm{\dfrac{1}{d} \sumiid G_i (\barX^{(k, t)})}\fs 
			+ \dfrac{1}{d} \sumiid \norm{X^{(k, t)}_i - \barX^{(k, t)}}\fs
			+ \norm{(\barX^{(k, t)})\zz \barX^{(k, t)} - I_p}\fs \\
			\leq {} & 2 \dkh{\hat{L}^2 + 1} \dkh{ 
				\norm{ \dfrac{1}{d} \sumiid H_i (X^{(k, t)}_i) }\fs
				+ \dfrac{1}{d} \sumiid \norm{X^{(k, t)}_i - \barX^{(k, t)}}\fs 
			}.
	\end{aligned}
	\end{equation*}
	This together with Proposition \ref{prop:descent} yields that
	\begin{equation*}
		\dfrac{1}{\sqrt{l} K} \sum_{k = 0}^K \sum_{t = 0}^q
		\bE \fkh{\mathtt{StaGap} (\bfX^{(k, t)})}
		\leq \dfrac{2 (\hat{L}^2 + 1) C_4}{\sqrt{l} K}
		= \dfrac{C}{\sqrt{l} K}.
	\end{equation*}
	Since
	\begin{equation*}
		\dfrac{1}{\sqrt{l} K} \sum_{k = 0}^K \sum_{t = 0}^q
		\bE \fkh{\mathtt{StaGap} (\bfX^{(k, t)})}
		\geq \min_{\substack{k = 0, 1, \dotsc, K \\ t = 0, 1, \dotsc, q}} 
		\bE \fkh{ \mathtt{StaGap} (\bfX^{(k, t)}) },
	\end{equation*}
	we complete the proof.
\end{proof}

\end{appendices}

\end{document}